\newtheorem{Thm}{Theorem}[section]
\newtheorem{Lem}[Thm]{Lemma}
\newtheorem{Def}[Thm]{Definition}
\newtheorem{Cor}[Thm]{Corollary}
\newtheorem{Prop}[Thm]{Proposition}
\newtheorem{Ex1}[Thm]{Example}
\newtheorem{Rem1}[Thm]{Remark}
\newtheorem{Assumption}[Thm]{Assumption}
\newenvironment{Rem}{\begin{Rem1}\rm}{\end{Rem1}}
\newenvironment{Ex}{\begin{Ex1}\rm}{\end{Ex1}}
\numberwithin{equation}{section}
\title{On simple-minded systems and $\tau$-periodic modules of self-injective algebras}
\author{Aaron Chan, Yuming Liu and Zhen Zhang}
\address{Aaron Chan
\newline Graduate School of Mathematics
\newline Nagoya University
\newline Furocho, Chikusaku
\newline Nagoya, Japan}
\email{aaron.kychan@gmail.com}
\address{Yuming Liu
\newline School of Mathematical Sciences
\newline Laboratory of Mathematics and Complex Systems
\newline Beijing Normal University
\newline Beijing 100875
\newline P.R.China}
\email{ymliu@bnu.edu.cn}
\address{Zhen Zhang
\newline School of Mathematical Sciences
\newline Beijing Normal University
\newline Beijing 100875
\newline P.R.China}
\email{zhangzhen@mail.bnu.edu.cn}
\date{version of \today}
\newenvironment{Proof}[1][Proof]{\begin{trivlist}
\item[\hskip \labelsep {\bfseries #1}]}{\flushright
$\Box$\end{trivlist}}
\newcommand{\lra}{\longrightarrow}
\newcommand{\ra}{\rightarrow}
\newcommand{\sdp}{\times\kern-.2em\vrule height1.1ex depth-.05ex}
\newcommand{\epi}{\lra \kern-.8em\ra}
\newcommand{\ul}{\underline}
\newcommand{\stmod}{\underline{\mathrm{mod}}}
\newcommand{\sthom}{\underline{\mathrm{Hom}}}
\newcommand{\stend}{\underline{\mathrm{End}}}
\thanks{We would like to thank Steffen Koenig for his comments and suggestions on a preliminary version of this paper. We would like to thank Andrzej Skowronski for some correspondence on the results in the paper \cite{MS}. This research was started when the first author visit Beijing Normal University, and he would like to thank the School of Mathematical Sciences for their hospitality. The second author is supported by NCET Program from MOE of China and by NNSF (No.11171325, No.11331006).}
\begin{document}
\renewcommand{\thefootnote}{\alph{footnote}}
\renewcommand{\thefootnote}{\alph{footnote}}
\setcounter{footnote}{-1} \footnote{\it{Mathematics Subject
Classification(2010)}: 16G10, 16G70.}
\renewcommand{\thefootnote}{\alph{footnote}}
\setcounter{footnote}{-1} \footnote{ \it{Keywords}: simple-minded system, self-injective algebra, (stable) Auslander-Reiten quiver, stably quasi-serial component, homogeneous tube.}

\begin{abstract}
Let $A$ be a finite-dimensional self-injective algebra over an algebraically closed field, $\mathcal{C}$ a stably quasi-serial component (i.e. its stable part is a tube) of rank $n$ of the Auslander-Reiten quiver of $A$, and $\mathcal{S}$ be a simple-minded system of the stable module category $\stmod{A}$. We show that the intersection $\mathcal{S}\cap\mathcal{C}$ is of size strictly less than $n$, and consists only of modules with quasi-length strictly less than $n$.
In particular, all modules in the homogeneous tubes of the Auslander-Reiten quiver of $A$ cannot be in any simple-minded system.
\end{abstract}

\maketitle

\section{Statements of the main results}

\medskip
Let $k$ be a commutative artin ring, and $A$ an artin $k$-algebra.
We denote by mod$A$ the category of all finitely generated left
$A$-modules, and by
$\stmod A$ the stable category of mod$A$, that is, the category with the same class of objects but with morphism spaces $\sthom_A(X,Y)$ being the quotient of the ordinary one $\mathrm{Hom}_A(X,Y)$ by maps factoring through projective modules.
{\it Although most definitions and the problem we consider in this paper can be discussed in this more general setting, we only concentrate, for technical reasons, on the case when $k$ is an algebraically closed field and $A$ is a finite-dimensional self-injective $k$-algebra.}
We also further assume, without loss of generality, that $A$ is ring-indecomposable and non-simple throughout the article.

\medskip

\begin{Def}\label{stbrick}
An $A$-module $M$ is a \emph{stable brick} if $\stend_A(M)\cong k$.

A set $\mathcal{S}$ of $A$-modules is a \emph{stable semibrick} if it consists of pairwise orthogonal stable bricks, i.e. every $X\in \mathcal{S}$ is a stable brick and $\sthom_A(X,Y)=0$ for all distinct $X,Y\in \mathcal{S}$.
\end{Def}

Consider the following way of reconstructing objects of $\stmod{A}$.
Let $\mathcal{S}$ be a class of $A$-modules.
The full subcategory
$\langle\mathcal{S}\rangle$ of $\stmod{A}$ is the additive closure of
$\mathcal{S}$. Denote by
$\langle\mathcal{S}\rangle\ast\langle\mathcal{S'}\rangle$ the class
of indecomposable $A$-modules $Y$ such that there is a short exact
sequence $0 \rightarrow X \rightarrow Y\oplus P \rightarrow Z\rightarrow 0$ with $X\in \langle\mathcal{S}\rangle, Z\in\langle\mathcal{S'}\rangle$,
and $P$ projective. Define $\langle\mathcal{S}\rangle_1:=\langle\mathcal{S}\rangle$ and
$\langle\mathcal{S}\rangle_n:=\langle\langle\mathcal{S}\rangle_{n-1}\ast\langle\mathcal{S}\rangle\rangle$ for $n>1$.

Under our assumption on $A$, we can simplify the original definition (from \cite{KL}) of the main subject of interest as follows.

\begin{Def}\label{sms} {\rm(\cite{KL})}
Let $A$ be a self-injective algebra over an algebraically closed field.  A set $\mathcal{S}$ of objects in $\stmod A$ is called a \emph{simple-minded system} (or \emph{sms} for short)
if the following conditions are satisfied:
\begin{enumerate}
\item (orthogonality) $\mathcal{S}$ is a stable semibrick.
\item (finite filtration) For each indecomposable non-projective
$A$-module $X$, there exists some natural number $n$ (depending on
$X$) such that $X\in\langle\mathcal{S}\rangle_n$.
\end{enumerate}
\end{Def}

It is clear that the set of (isoclass representatives of) simple modules is an example of sms.
It has been shown in \cite{KL} that each sms {has} finite
cardinality and the sms's are invariant under stable equivalences.  One of the fundamental questions concerning sms's is the following ``simple-image problem".  Namely, given an sms $\mathcal{S}$ of $A$, is this the image of the simple modules under some stable equivalence \underline{mod}$B\rightarrow$ \underline{mod}$A$? It is shown in \cite{CKL} that the answer to this question is true for representation-finite self-injective algebras.

\medskip

As a generalisation of the notion of simple modules, we are interested in finding how far one can generalise various properties of simple modules to that of sms's.  In this note, we prove one of such properties.  Let us be more specific now.

Recall that the {\it Auslander-Reiten quiver} (AR-quiver) $\Gamma_A$ of A is a translation quiver
whose vertices are the isomorphism classes of indecomposable (finitely generated)
$A$-modules, arrows are the irreducible maps valued by their multiplicities, and
whose translation is the Auslander-Reiten translate $\tau$ (see \cite{ARS}). A (connected) component
$\mathcal{C}$ of $\Gamma_A$ is a {\it homogeneous tube} if it is of the form $\mathbb{Z}A_\infty /\langle\tau\rangle$ (see \cite{Ringel}).  In particular, all modules in a homogeneous tube of $\Gamma_A$ are of $\tau$-period $1$.
Note that none of the simple modules of a self-injective algebra lie in a homogeneous tube.

More generally, following Erdmann and Kerner \cite{EK}, we call a component
$\mathcal{C}$ of $\Gamma_A$ {\it stably quasi-serial of rank $n$} if its stable part (that is, the full subquiver obtained by removing all vertices corresponding to indecomposable projective-injective modules) is of the form $\mathbb{Z}A_\infty /\langle\tau^n\rangle$.
It is known that a stably quasi-serial component of rank $n$ contains at most $n-1$ simple modules (see \cite{MS}).
Our first main result generalizes this result to any sms.

\begin{Thm}\label{main-result-1}
Let $A$ be a self-injective algebra over an algebraically closed field and $\mathcal{C}$ a stably quasi-serial component of rank $n$. Then the number of elements in an sms of $A$ lying in $\mathcal{C}$ is strictly less than $n$. In particular, none of the indecomposable module in an sms of $A$ lie in the homogeneous tubes of the AR-quiver $\Gamma_A$ of $A$.
\end{Thm}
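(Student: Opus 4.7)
The plan is to split the theorem into two sub-claims whose combination subsumes the homogeneous-tube conclusion as the case $n=1$: (a) every $M\in \mathcal{S}\cap\mathcal{C}$ has quasi-length strictly less than $n$, and (b) $|\mathcal{S}\cap\mathcal{C}|<n$. Label the vertices in the stable part of $\mathcal{C}$ by pairs $(i,\ell)$ with $i\in\mathbb{Z}/n\mathbb{Z}$ and $\ell\geq 1$, and write $M_{i,\ell}$ for the corresponding module, so that $\tau M_{i,\ell}=M_{i-1,\ell}$ and the quasi-top, respectively quasi-socle, of $M_{i,\ell}$ is $M_{i,1}$, respectively $M_{i-\ell+1,1}$.

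For part (a), whenever $\ell\geq n$ I would produce a non-scalar stable endomorphism of $M_{i,\ell}$. Composing irreducible maps along a ``rotation'' path of length $n$ inside $\mathcal{C}$ yields a non-zero endomorphism of $M_{i,\ell}$ whose image is a proper submodule, hence nilpotent; one then verifies, using the mesh relations together with the Auslander-Reiten formula, that this endomorphism survives modulo maps factoring through the projective-injective modules attached to $\mathcal{C}$, so that $\stend_A(M_{i,\ell})\supsetneq k$. Thus $M_{i,\ell}$ is not a stable brick in the sense of Definition~\ref{stbrick}, and hence cannot lie in any sms.

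For part (b), I would argue by contradiction, assuming $n$ elements $N_s=M_{i_s,\ell_s}$ ($1\leq s\leq n$) with $\ell_s\leq n-1$ by (a). In the spirit of the proof for simple modules recalled from~\cite{MS}, I would translate each orthogonality condition $\sthom_A(N_s,N_t)=0$ ($s\neq t$), via the Auslander-Reiten formula, into a vanishing constraint for $\mathrm{Ext}^1_A(N_t,\tau N_s)$, and combine these with the brick conditions $\stend_A(N_s)=k$. A combinatorial analysis of Hom-spaces between modules of quasi-length strictly less than $n$ in a rank-$n$ tube should then show that the resulting system of $n^2$ constraints admits no simultaneous solution in $\mathcal{C}$, forcing $\sthom_A(N_s,N_t)\neq 0$ for some pair $s\neq t$ and contradicting the orthogonality axiom of Definition~\ref{sms}.

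The principal technical obstacle throughout is controlling $\sthom_A$ in the presence of projective-injective modules possibly attached to the stable part of $\mathcal{C}$. In the pure $\mathbb{Z}A_\infty/\langle\tau^n\rangle$ picture all the relevant Hom and End computations are routine mesh-knitting; but in a stably quasi-serial component of a general self-injective algebra, each candidate non-zero (or zero) map could \emph{a priori} factor through an attached projective-injective, so the Auslander-Reiten formula in the triangulated category $\stmod A$ has to be combined with a careful case analysis of the AR-sequences in $\mathcal{C}$ whose middle terms carry projective-injective summands.
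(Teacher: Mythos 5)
Your decomposition into (a) quasi-length $<n$ and (b) cardinality $<n$ is attractive, but both halves have genuine gaps, and (b) in fact contradicts an explicit fact recorded in the paper.

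For (a), the ``rotation'' argument works for quasi-length $\ell>n$: composing down $n$ steps and back up $n$ steps does give a non-zero, non-isomorphism stable endomorphism of $M_{i,\ell}$ (this is essentially Lemma~\ref{r<n}). But at $\ell=n$ the rotation path passes through the zero object $X_i(0)$, so the composite is already zero in $\mathrm{mod}\,A$ and gives no information; this is exactly the remark following Lemma~\ref{stbrick-induces-hom}. Worse, the claim that $M_{i,n}$ is never a stable brick is simply false: Lemma~\ref{Xl-Xj(n)} shows that $X_i(n)$ \emph{is} a stable brick precisely when the quasi-simples of $\mathcal{C}$ form a stable semibrick, and such examples abound. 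So the case $\ell=n$ cannot be excluded by a brick argument at all; it is the hard case, and disposing of it is the entire content of Theorem~\ref{main-result-2} and Section 5.

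For (b), you propose to derive a contradiction from the orthogonality constraints $\sthom_A(N_s,N_t)=0$ and the brick conditions $\stend_A(N_s)\cong k$ for $n$ modules of quasi-length $<n$ in $\mathcal{C}$. No such contradiction exists: the $n$ quasi-simples $X_1,\dots,X_n$ all have quasi-length $1<n$ and they can perfectly well be pairwise orthogonal stable bricks -- the paper exhibits a family of such algebras immediately after Lemma~\ref{|S|<n}. So a ``combinatorial analysis of Hom-spaces'' cannot force some $\sthom_A(N_s,N_t)\neq 0$. The point you are missing is that being a stable semibrick is a strictly weaker condition than being an sms; the definition of sms has a second, \emph{generation} clause (finite filtration). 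The paper's actual proof splits differently: Lemma~\ref{|S|<n} handles the case where some $X_i(r)\in\mathcal{S}$ with $r>1$ using only orthogonality, reducing to the case $\mathcal{S}\cap\mathcal{C}=\{X_1,\dots,X_n\}$, and then invokes the filtration condition via Proposition~\ref{main-strat}: the cosyzygy sequence $M_r=\Omega(X_{\overline{i-r}}(r+1))$ is shown (Lemmas~\ref{Lem-Omega-triangle}--\ref{dimensionformula2}) to have $\ell_\mathcal{S}(M_r)=\infty$, contradicting the sms axiom. Without appealing to the generation condition of Definition~\ref{sms}, the theorem cannot be proved, and that appeal is entirely absent from your proposal.
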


\medskip
According to a result of Crawley-Boevey \cite{CB}, if $A$ is tame, then ``almost all modules" (or precisely, for each $d>0$, all but a finite number of isomorphism classes of indecomposable $A$-modules of dimension $d$) lie in homogeneous tubes.  Therefore, our result excludes most of the modules of a tame self-injective algebra from forming an sms.

\medskip
\begin{Rem}\label{rem:bdd}
We remark that the bound given in Theorem \ref{main-result-1} is the best possible.
For a concrete example, for each $n>1$, consider the algebra $A=kQ/I$ whose quiver $Q$ is given by
$$ Q: \xymatrix{
 1 \ar[d]_{\alpha} & & n+1 \ar[ll]_{\alpha} \ar@/^/@<2ex>[d]^{\gamma} \\
 2 \ar[r]_{\alpha} & \cdots \ar[r]_{\alpha} & n \ar@/^/[u]^{\alpha}  \ar@/_/@<-1ex>[u]^{\gamma}
}
$$
and $I$ is generated by $\alpha\gamma$, $\gamma\alpha$, and $\alpha^{n+1}-\gamma^{2}$, whenever these compositions make sense.
Then $A$ is a (tame) symmetric special biserial algebra and one can calculate its AR-quiver explicitly; see, for example, \cite{Erdmann}.
$A$ has two stably quasi-serial component of rank $n$.
The simple modules $S_i$ for $i\in\{1,2,\ldots, n-1\}$ all lie in the same stably quasi-serial component of rank $n$; in fact, they all lie on the mouth of this component.
Note that $S_n$ and $S_{n+1}$ lie in distinct Euclidean components of the stable AR-quiver.
\end{Rem}

\medskip
While it is possible to have modules in a stably quasi-serial component of higher rank belonging to an sms, one can deduce easily from results in \cite{EK} (see Lemma \ref{r<n}) that the quasi-lengths (that is, the positions in an infinite sectional path $A_\infty = (1\to 2 \to 3 \to \cdots) \subset \mathbb{Z}A_\infty/\langle\tau^n\rangle$ in the stable part of the component) of such modules are not more than the rank $n$. Our second main result shows that the quasi-lengths of such modules are always less than the rank.

\begin{Thm}\label{main-result-2}
Let $A$ be a self-injective algebra over an algebraically closed field and $\mathcal{C}$ a stably quasi-serial component of rank $n$. Then every object in an sms lying in $\mathcal{C}$ has quasi-length less than $n$.
\end{Thm}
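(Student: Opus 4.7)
The plan is to argue by contradiction: suppose $M\in\mathcal{S}\cap\mathcal{C}$ has quasi-length exactly $n$. Since $A$ is self-injective, Auslander--Reiten duality in $\stmod{A}$ yields
\[
\sthom_A(M,\tau M)\cong D\stend_A(M)=k\quad\text{and}\quad\sthom_A(\tau M,M)\cong D\stend_A(\tau M)=k,
\]
because $M$ and $\tau M$ are stable bricks. Let $\phi\colon M\to\tau M$ and $\psi\colon\tau M\to M$ be non-zero generators. Since $\mathcal{C}$ has rank $n\geq 2$, $\tau M\neq M$, and orthogonality of $\mathcal{S}$ forces $\tau M\notin\mathcal{S}$. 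A first key observation is that $\psi\circ\phi=0$ in $\stend_A(M)=k$: otherwise $\psi\circ\phi$ would be a non-zero scalar, making $\phi$ a split monomorphism in $\stmod{A}$; since $M$ and $\tau M$ are indecomposable non-projective, this would give $M\cong\tau M$ in $\stmod{A}$ and hence as $A$-modules, a contradiction.

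Next I invoke the finite filtration axiom on $\tau M\in\langle\mathcal{S}\rangle_{m}$ for some $m\geq 2$: there is a short exact sequence $0\to X\to\tau M\oplus P\to Y\to 0$ with $X\in\langle\mathcal{S}\rangle_{m-1}$, $Y\in\langle\mathcal{S}\rangle$, and $P$ projective. Decompose $Y=\bigoplus_i Y_i$ with $Y_i\in\mathcal{S}$, and consider the composition $\eta\colon M\xrightarrow{\phi}\tau M\to Y$ in $\stmod{A}$. The $Y_i$-component of $\eta$ lies in $\sthom_A(M,Y_i)$: if $Y_i\not\cong M$ this vanishes by orthogonality, while if $Y_i\cong M$ the component factors as $M\xrightarrow{\phi}\tau M\to M$ (via the summand projection), which is a scalar multiple of $\psi\circ\phi=0$. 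Hence $\eta=0$, and $\phi$ lifts to a non-zero morphism $\phi_1\colon M\to X$ via the triangle in $\stmod{A}$ coming from the SES. The intention is to iterate this lifting through the filtration of $X$, producing non-zero lifts at each stage and ending with a non-zero map $M\to X^{(m-1)}\in\langle\mathcal{S}\rangle$ that forces $M$ to appear as a direct summand of $X^{(m-1)}$. Tracing this back through the filtration chain, together with the AR-structure of the tube, should produce either more than $n-1$ distinct sms elements in $\mathcal{C}$ (contradicting Theorem~\ref{main-result-1}) or an sms element in $\mathcal{C}$ distinct from but non-orthogonal to $M$, again a contradiction.

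The main obstacle is in carrying out this iteration. At later filtration stages the analogue of the key vanishing $\psi\circ\phi=0$ is not automatic, since $X$ and the subsequent $X^{(k)}$ are not directly related to $M$ via AR-duality; the vanishing must be reestablished at each step, presumably by analysing the contribution of $M$-summands appearing along the filtration together with the lifts $\phi_k$. Equally delicate is ensuring that the sms elements encountered in the filtration of $\tau M$ lie inside $\mathcal C$, so that the bound from Theorem~\ref{main-result-1} may legitimately be invoked. Both of these issues are expected to rely on a refined use of the hammock/mesh structure of stable quasi-serial components developed in \cite{EK}, in particular to control morphisms between modules in $\mathcal{C}$ and to exclude the possibility that the filtration escapes into other components of $\Gamma_A$.
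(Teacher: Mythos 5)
Your strategy---lifting a nonzero map through a filtration of $\tau M$---is genuinely different from the paper's. The paper instead constructs an explicit infinite sequence of pairwise nonisomorphic modules $M_l = \Omega(X_i(mn + j_t))$ lying in $\Omega(\mathcal{C})$ (which, crucially, is a component distinct from $\mathcal{C}$ by Lemma \ref{omega-fix-C}), and uses the dimension formulas of Erdmann--Kerner to pin down all non-split triangles $N \to M_l \to S$ with $S \in \mathcal{S}$, so that the well-founded descent of Proposition \ref{main-strat} applies. Your sketch, as it stands, has one technical error and one essential unresolved gap which you yourself flag.

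The technical error: the isomorphism $\sthom_A(M, \tau M) \cong D\stend_A(M)$ is not what Auslander--Reiten duality asserts. AR duality gives $\underline{\rm Ext}^1_A(M, \tau M) \cong D\stend_A(M)$, equivalently $\sthom_A(M, \nu\Omega M) \cong D\stend_A(M)$, and $\nu\Omega$ (the Serre functor) is not $\tau = \nu\Omega^2$. The one-dimensionality of $\sthom_A(X_i(n), \tau X_i(n))$ does hold, but its proof needs the wing formula of Lemma \ref{dimsum} together with the non-trivial fact that $\Omega$ moves $\mathcal{C}$ to a different component when $X_i(n)$ is a brick; it is not a formal consequence of duality.

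The essential gap is the iteration. The vanishing $\psi\phi = 0$ controls only the first lifting. After obtaining $\phi_1\colon M \to X$ with $X \in \langle\mathcal{S}\rangle_{m-1}$, the next obstruction is a composite $M \xrightarrow{\phi_1} X \to Y'_i \cong M$, which is an arbitrary element of $\stend_A(M) \cong k$, no longer expressible as $\psi\phi$. If it is nonzero it is an isomorphism, making $M$ a direct summand of $X$; but this is not a contradiction, since $M \in \mathcal{S} \subseteq \langle\mathcal{S}\rangle_{m-1}$ anyway, and the lifting simply stalls. There is no mechanism to ``re-establish" the vanishing at the next stage, because the intermediate modules $X^{(k)}$ live anywhere in $\Gamma_A$ and the AR-structure of $\mathcal{C}$ gives no control over them. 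Your concluding suggestion that one should eventually produce $\geq n$ sms elements in $\mathcal{C}$, or a non-orthogonal pair, is an expectation rather than a derivation; the preceding steps do not deliver it. The paper's device precisely avoids this: rather than chasing a single filtration, it exhibits infinitely many $M_l$ whose only nonzero stable maps to $\mathcal{S}$ go to $X_i(n)$ or to the finitely many $S_t \in \mathcal{S} \cap \Omega(\mathcal{C})$ (encoded by the data $n = j_0 > j_1 > \cdots > j_a$), and these maps give cones which are again terms of the sequence. That bookkeeping, done in Lemmas \ref{sms-triangles}--\ref{OmegaHom}, is the actual content of the proof and has no analogue in your proposal.
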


\begin{Rem} We note that a module of quasi-length $l$ for any $1\leq l<n$ in a stably quasi-serial component of rank $n$ can be in an sms of $\stmod{A}$.
For a concrete example, consider the path algebra $A=KQ/I$ whose quiver $Q$ is given by
$$ Q: \xymatrix{
  1 \ar@/^/[r]^{\alpha} & 2 \ar@/^/[l]^{\beta}   \ar@/^/[r]^{\alpha} &\cdots  \ar@/^/[l]^{\beta} \ar@/^/[r]^{\alpha} & n-1  \ar@/^/[l]^{\beta} \ar[dr]_{\gamma} & n+1  \ar[l]_{\gamma}  \ar@/^/@<2ex>[d]^{\delta} \\
 & &  &  & n \ar[u]^{\gamma}  \ar@/_/@<-1ex>[u]^{\delta}  }
 $$ 
and $I$ is generated by
$$\alpha^{2}, \beta^{2}, \alpha\gamma, \gamma\beta, \delta\gamma, \gamma\delta, \alpha\beta-\beta\alpha, \beta\alpha-\gamma^{3}, \delta^{2}-\gamma^{3},$$
whenever these compositions make sense.
Then $A$ is a (tame) symmetric special biserial algebra; in fact, it is derived (hence, stably) equivalent to the algebra in Remark \ref{rem:bdd}.
The simple module $S_{n-1}$ corresponding to the vertex $n-1$ is of quasi-length $n-1$ in one of the rank-$n$ tubes.
In fact, if $n>1$, then we have $S_1=X(1)$ and $S_2=Y(2)$ are of quasi-length $1$ and $2$ respectively lying in distinct rank $n$ tubes, and for $i\leq n-1$, we have $S_{i}=\tau^{j}(X(i))$ for all odd $i=2j+1$ and $S_{i}=\tau^{j-1}(Y(i))$ for all even $i=2j$.  Here $S_i$ is a module of quasi-length $i$.
\end{Rem}
\medskip

This article is structured as follows.
In the first subsection of Section \ref{sec:strat-cor}, we will explain the core strategy in the proof of the two main theorems, and give some brief comments of our results in the more general setting of Hom-finite Krull-Schmidt triangulated categories.
In the second subsection, we will present some easy consequences of the main theorems.

The remaining part will then be devoted to proving the main theorems.
We will recall some known results of the Auslander-Reiten theory of finite dimensional self-injective algebras from \cite{EK} in Section \ref{sec:remind}, for which our proofs rely heavily on.
The final two sections are devoted solely to proving Theorem \ref{main-result-1} and Theorem \ref{main-result-2} respectively.

\bigskip

\section{Strategy and Consequences}\label{sec:strat-cor}
From now on, $A$ will always be a ring-indecomposable non-simple self-injective algebra.
Recall that the stable category of a self-injective algebra has a triangulated structure, with suspension functor being the cosyzygy functor $\Omega^{-1}$ (see, for example, \cite{Happel}).  We will freely use the properties of this triangulated structure.
We will often use $\ul{(X,Y)}$ in place of $\sthom_A(X,Y)$ for a cleaner presentations of various exact sequences.
For $f:X\to Y$ an $A$-module map, we denote by $\underline{f}$ the image in the quotient $\sthom_A(X,Y)$.

\subsection{Strategy of proof}\label{subsec:strat}

Let us first remark that the proof we use is completely different from that of \cite{MS}.  The method in \cite{MS} relies on looking at the (composition) length of the projective modules in the component $\Omega(\mathcal{C})$; whereas our method is, roughly speaking, to show that, when an sms $\mathcal{S}$ contains undesired modules in $\mathcal{C}$, then there are modules have infinite ``length with respect to $\mathcal{S}$", which is a contradiction to the definition of sms.
Let us be more precise about this.

In \cite{Dugas}, Dugas defines sms's in a more general setting: Hom-finite Krull-Schmidt triangulated $k$-category $\mathcal{T}$.
The definition he uses is {\it not} just replacing the short exact sequences used to define $\langle\mathcal{S}\rangle_n$ by triangles, but the following.

Define for any two classes of objects $\mathcal{S}, \mathcal{S'}$ in $\stmod A$
$$\mathcal{S}\ast_\triangle\mathcal{S'}:=\{Y\in \mathcal{T} \mid\mbox{ there is a triangle }X \rightarrow Y \rightarrow Z \mbox{ with }X\in \mathcal{S}, Z\in\mathcal{S'}\}.$$

Then we can define $(\mathcal{S})_0:=\{0\}$ and
$(\mathcal{S})_n:=(\mathcal{S})_{n-1}\ast_\triangle(\mathcal{S}\cup{\{0\}})$ for $n\geq1$.
Now, a set $\mathcal{S}$ of $A$-modules in $\stmod{A}$, with $A$ self-injective and over an algebraically closed field, is an sms (in the sense of Dugas \cite{Dugas}) if $\mathcal{S}$ is a stable semibrick such that for all $X\in \stmod{A}$, we have $X\in (\mathcal{S})_n$ for some natural number $n$.

It is not difficult to see that this definition is equivalent to the one in Definition \ref{sms}.
Indeed, since $(\mathcal{S})_n$ is closed under direct summand when $\mathcal{S}$ is a stable semibrick \cite[Lemma 2.7]{Dugas}, so one can show that, by induction on $n$, $X\in \langle\mathcal{S}\rangle_n$ if, and only if, there is some $m\geq n$ such that $X\in (\mathcal{S})_{m}$.

The main difference of Dugas' definition and the original one in \cite{KL} is equivalent to the difference of filtering a module by its Loewy layer and by its composition factors. Indeed, for a stable semibrick $\mathcal{S}$ and a non-projective indecomposable module $X$, we can define
\[
\mathrm{LL}_\mathcal{S}(X) := \inf\{ n\geq 1 \mid X\in\langle\mathcal{S}\rangle_n \}, \;\;\text{and}\;\;
\ell_\mathcal{S}(X) := \inf\{ n\geq 1\mid X\in(\mathcal{S})_n \}.
\]
Then these should be viewed, respectively, as the Loewy length and composition length of $X$ with respect to $\mathcal{S}$ - it is clear that when $\mathcal{S}$ is the set of simple modules, then this coincides with the classical notion.
Now, Koenig-Liu's (resp. Dugas') definitions of sms's can be rephrased as a stable semibrick $\mathcal{S}$ such that every $X\in \stmod{A}$ has $\mathrm{LL}_{\mathcal{S}}(X)<\infty$ (resp. $\ell_{\mathcal{S}}(X)<\infty$).

The strategy that we will use to prove both Theorem \ref{main-result-1} and \ref{main-result-2} can now be more precisely stated as follows.

\begin{Prop}\label{main-strat}
Let $\mathcal{S}$ be a stable semibrick.
If there is a sequence $(M_i)_{i\geq 1}$ of pairwise non-isomorphic indecomposable non-projective $A$-module such that for all $i\geq 1$, we have
\begin{enumerate}[(i)]
\item $M_i\ncong S$ for all $S\in \mathcal{S}$;
\item for any $S\in \mathcal{S}$ and any non-split triangle $N\to M_i \to S\to $, every indecomposable direct summand of $N$ is isomorphic (in $\stmod{A}$) to $M_j$ for some $j>i$,
\end{enumerate}
then $\ell_{\mathcal{S}}(M_i)=\infty$ for all $i\geq 1$.
In particular, $\mathcal{S}$ cannot be an sms of $\stmod{A}$.
\end{Prop}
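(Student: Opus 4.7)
The plan is to argue by contradiction: suppose $\ell_{\mathcal{S}}(M_i) = n < \infty$ for some $i$, and produce an index $j > i$ with $\ell_{\mathcal{S}}(M_j) \leq n-1$; iterating this descent $n$ times will then give a contradiction. The whole argument rests on two small observations, namely that the degenerate shapes of the defining triangle of $M_i \in (\mathcal{S})_n$ are ruled out by condition (i), and that the summand-closure of $(\mathcal{S})_{n-1}$ for a stable semibrick \cite[Lemma 2.7]{Dugas} converts any summand of the ``kernel'' term into an object of controlled $\ell_{\mathcal{S}}$-length.

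So suppose $\ell_{\mathcal{S}}(M_i) = n < \infty$. By minimality, $M_i \in (\mathcal{S})_n \setminus (\mathcal{S})_{n-1}$, and the definition $(\mathcal{S})_n = (\mathcal{S})_{n-1} \ast_\triangle (\mathcal{S} \cup \{0\})$ supplies a triangle $N \to M_i \to Z \to $ in $\stmod{A}$ with $N \in (\mathcal{S})_{n-1}$ and $Z \in \mathcal{S} \cup \{0\}$. If $Z = 0$, rotation yields $M_i \cong N$, so $M_i \in (\mathcal{S})_{n-1}$, contradicting the choice of $n$. If $Z = S \in \mathcal{S}$ and the triangle splits, then $M_i \cong N \oplus S$ in $\stmod{A}$; but $M_i$ is indecomposable non-projective (hence indecomposable and nonzero in $\stmod{A}$), while $S$ is nonzero in $\stmod{A}$ as a stable brick, so we must have $N \cong 0$ in $\stmod{A}$ and $M_i \cong S$, violating (i). Thus $Z = S \in \mathcal{S}$ and the triangle is non-split, with $N$ nonzero in $\stmod{A}$.

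Now apply hypothesis (ii) to this non-split triangle: every indecomposable summand of $N$ is isomorphic in $\stmod{A}$ to some $M_j$ with $j > i$. Picking such a summand $M_j$ and using summand-closure of $(\mathcal{S})_{n-1}$ gives $\ell_{\mathcal{S}}(M_j) \leq n-1$. Iterating produces a strictly increasing sequence of indices $i < j_1 < j_2 < \cdots$ with $\ell_{\mathcal{S}}(M_{j_k}) \leq n - k$. After $n$ steps, $M_{j_n} \in (\mathcal{S})_0 = \{0\}$, contradicting that $M_{j_n}$ is indecomposable non-projective (hence nonzero in $\stmod{A}$). This forces $\ell_{\mathcal{S}}(M_i) = \infty$ for every $i \geq 1$, and the final sentence is immediate: an sms (in either Dugas' or Koenig--Liu's sense, equivalent by the discussion preceding the proposition) requires every object of $\stmod{A}$ to have finite $\ell_{\mathcal{S}}$-value, which the $M_i$ do not.

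I do not foresee any substantive obstacle within the proof of this proposition itself: the descent is transparent once conditions (i) and (ii) are in hand and the cited summand-closure has been imported. The genuine difficulty of the paper lies not here but in \emph{constructing} the sequence $(M_i)_{i \geq 1}$ verifying (i) and (ii) in the concrete settings of Theorems \ref{main-result-1} and \ref{main-result-2}, where one must carry out the AR-theoretic analysis of stably quasi-serial components and show that candidate modules in $\mathcal{C}$ always admit a descent of this form.
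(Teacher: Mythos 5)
Your proof is correct and takes essentially the same approach as the paper's: an iterated descent in $\ell_{\mathcal{S}}$-length using conditions (i) and (ii), with a contradiction at the bottom; you simply spell out a few edge cases (the $Z=0$ and split-triangle cases) that the paper glosses over. One tiny imprecision: the iteration actually produces a contradiction once $\ell_{\mathcal{S}}$ reaches $1$ (since then $N\in(\mathcal{S})_0=\{0\}$ forces the triangle to split, giving $M\cong S$ against (i)), so you never literally reach step $n$ with $M_{j_n}\in(\mathcal{S})_0$ --- but the conclusion is the same and this does not affect correctness.
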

\begin{Proof}
Suppose the contrary.
Then there is some $i\geq 1$ with $M_i\in (\mathcal{S})_r\setminus (\mathcal{S})_{r-1}$ for some positive integer $r$.
In other words, we have a filtration of $M_i$ by elements of $\mathcal{S}$:
\[
\xymatrix@R=10pt{
S_r \ar[rr] & & N_{r-1} \ar[ld] \ar[rr] & & N_{r-2}\ar[ld]\ar@{.}[rr] & & N_2\ar[rr] && M_i, \ar[ld] \\
 & S_{r-1} \ar@{+->}[lu] & & S_{r-2}\ar@{+->}[lu]&&&& S_1\ar@{+->}[lu] &
}
\]
where $ S_j\in \mathcal{S}$ for all $j$ and every triangle in the picture is a triangle in $\stmod{A}$ (with $\xymatrix@C=15pt@1{X\ar@{+->}[r] & Y}$ denote the connecting morphism $X\to \Omega^{-1}(Y)$ in a triangle).
It follows from (ii) that every indecomposable direct summand of $N_2$ is of the form $M_j$ with $j>i$, and so all of these $M_j$'s are in $(\mathcal{S})_{r-1}$.
Repeat this argument down the filtration, we have that $S_r\cong M_j$ for some $j>i$, which contradicts (i).
\end{Proof}

When the conditions of the proposition hold, then by the equivalence of Koenig-Liu's and Dugas' definitions of sms, we also have $\mathrm{LL}_{\mathcal{S}}(X)=\infty$.  Indeed, if one prefers working in Koenig-Liu's setting instead, then one can modify (ii) to have $S\in \langle\mathcal{S}\rangle$ and allows the triangle there to contain direct summands given by ``trivial triangles" $\Omega^{-1}(S')\to0\to S'\to$.  Since this is slightly more fiddly to work with, in this article, we will use the version presented above.

Finally, we remark that the investigation carried out in this note can also be discussed in the setting of Hom-finite Krull-Schmidt $k$-linear (with $k$ algebraically closed) triangulated category $\mathsf{T}$ that admits Auslander-Reiten theory (hence, Serre duality), i.e.

\begin{itemize}
\item Does an sms of $\mathsf{T}$ always contain less than $n$ objects in a rank $n$ tube $\mathbb{Z}A_\infty/\langle \tau^n\rangle$?
\item Is the quasi-length of an object in a tube always strictly less than the rank?
\end{itemize}

While our argument can be applied in such a setting, the caveat is that one needs to check that all the results we took from \cite{EK} hold in the above setting - a careful reader can soon see that, as the proofs of these results come down to the defining property of almost split sequences, there is no danger to transfer all the arguments and results to the above setting.
Since sms was introduced as a mean of tackling Auslander-Reiten conjecture on stable equivalences, c.f. \cite{KL}, we chose to focus this article in the setting of stable module categories.

\medskip
\subsection{Consequence of the main results}
The first consequence of our results is on the relation between three different notions of simple-like generators.
Let us start by recalling the following one from \cite{KL}.

\begin{Def}
\label{wsms} {\rm(\cite{KL})}
Let $A$ be a self-injective algebra over an algebraically closed field.
A set $\mathcal{S}$ of $A$-modules is said to \emph{spans} $\stmod{A}$ if for all indecomposable non-projective $A$-module $X$, $\sthom_A(X,S)\neq 0$.
A \emph{weakly simple-minded system} (or \emph{wsms} for short) of $\stmod{A}$ is a stable semibrick which spans $\stmod{A}$.
\end{Def}

It has been shown \cite[Theorem 5.6]{KL} that when $A$ is representation-finite self-injective, wsms is sufficient for (hence, equivalent to) defining an sms.  In fact, the proof of this result is closely related to Proposition \ref{main-strat}, namely, any indecomposable $A$-module has finite Loewy length with respect to a wsms $\mathcal{S}$; hence, it must also be an sms of $\stmod{A}$.

In this respect, the proofs of Theorem \ref{main-result-1} and \ref{main-result-2} reflects how difficult (if not impossible) it is to modify \cite[Theorem 5.6]{KL} to the representation-infinite case.
Having said that, we do not know any example of wsms that is not an sms.

In \cite{Pogorzaly1994}, Pogorzaly investigated another candidate, called {\it maximal system of orthogonal stable bricks}, of simple-like generators of stable module categories.
Simply put, such a system is just a wsms with an extra condition: every indecomposable module $X$ in the system must satisfy $\tau X\ncong X$.

While it is easy to see that any sms is an wsms, there is no apparent relations between sms's and maximal systems of orthogonal stable bricks, that is, it is not clear if every indecomposable module $X$ in an sms must satisfies $\tau X\ncong X$.
Thanks to Theorem \ref{main-result-1}, we can now see that the implication from sms to wsms actually ‘‘factors through" maximal system of orthogonal stable bricks, for (almost) all self-injective algebras.

\begin{Cor}\label{coro} Let $A$ be a self-injective algebra over an algebraically closed field which is not a local Nakayama algebra. Any sms of $A$ is a maximal system of orthogonal stable bricks.
\end{Cor}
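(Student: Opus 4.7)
The plan is to unpack the definition of ``maximal system of orthogonal stable bricks'' into its two defining conditions and verify both for an arbitrary sms $\mathcal{S}$: first, that $\mathcal{S}$ is a wsms, and second, that every $X\in\mathcal{S}$ satisfies $\tau X\ncong X$.

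The wsms condition is almost immediate from Definition~\ref{sms}. Given any indecomposable non-projective $A$-module $X$, the filtration $X\in\langle\mathcal{S}\rangle_n$ supplied by the sms axiom unwinds into a chain of short exact sequences whose outer terms lie in $\langle\mathcal{S}\rangle$; a short induction exhibits a nonzero stable map between $X$ and some element of $\mathcal{S}$, giving the spanning property.

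The heart of the argument is showing $\tau X\ncong X$ for every $X\in\mathcal{S}$. I would argue by contradiction: assume $X\in\mathcal{S}$ with $\tau X\cong X$. Since $X$ and $\tau X$ are indecomposable non-projective, this isomorphism lifts from $\stmod A$ to $\mathrm{mod}\,A$, so the AR-component $\mathcal{C}$ of $X$ is $\tau$-periodic and $X$ itself has $\tau$-period one. I then split into two cases. If $A$ is not Nakayama, the Happel--Preiser--Ringel theorem guarantees that $\mathcal{C}$ is a stably quasi-serial component $\mathbb{Z}A_\infty/\langle\tau^n\rangle$; the relation $\tau X\cong X$ forces $n=1$, so $\mathcal{C}$ is a homogeneous tube, and Theorem~\ref{main-result-1} immediately yields the desired contradiction. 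If instead $A$ is Nakayama but not local, then $A$ has $m\ge 2$ simples and the stable AR-quiver of $A$ is a connected cylinder $\mathbb{Z}A_{\ell-1}/\langle\tau^m\rangle$ (where $\ell$ is the Loewy length), in which every indecomposable non-projective module has $\tau$-period exactly $m\ge 2$, again contradicting $\tau X\cong X$.

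The only nontrivial step is the case analysis in the second part: to cover both flavours of ring-indecomposable self-injective $A$ uniformly, one needs on hand the correct description of $\tau$-periodic components, namely Happel--Preiser--Ringel for the non-Nakayama case and the explicit cylinder shape for Nakayama algebras. Once these inputs are in place, the corollary becomes a one-line reduction to Theorem~\ref{main-result-1}, and the exclusion of local Nakayama algebras from the hypothesis is explained precisely by the fact that for those algebras every indecomposable non-projective module is $\tau$-fixed (the operator $\tau=\Omega^2\nu^{-1}$ becomes trivial when there is a single simple), so the implication ``sms $\Rightarrow$ maximal system of orthogonal stable bricks'' genuinely fails there.
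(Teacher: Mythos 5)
Your overall strategy is the same as the paper's: reduce to Theorem~\ref{main-result-1} by showing that a $\tau$-fixed indecomposable $X$ over a non-local-Nakayama self-injective $A$ necessarily sits in a homogeneous tube. The paper obtains this in one stroke by citing Hoshino's theorem (\cite{Hoshino}, Theorem~1), which says exactly that for a basic indecomposable artin algebra over an algebraically closed field, the existence of an indecomposable $M$ with $\tau M\cong M$ forces $A$ to be local Nakayama or $M$ to lie in a homogeneous tube. You instead attempt to reconstruct this fact by a Nakayama / non-Nakayama dichotomy, and that reconstruction has a gap.

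The gap is in your first case, ``$A$ not Nakayama.'' You assert that Happel--Preiser--Ringel forces the component $\mathcal{C}$ of a $\tau$-periodic module to be of the form $\mathbb{Z}A_\infty/\langle\tau^n\rangle$, so that $\tau X\cong X$ forces $n=1$ and you land in Theorem~\ref{main-result-1}. But that classification of $\tau$-periodic components only gives a tube when $A$ is representation-infinite. If $A$ is representation-finite and not Nakayama, the stable AR-quiver is a single finite component of the form $\mathbb{Z}\Delta/G$ with $\Delta$ a Dynkin tree, which is not stably quasi-serial, so Theorem~\ref{main-result-1} does not apply and you have said nothing about whether $\tau$ can fix a vertex there. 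Ruling this out is not trivial; it is precisely the content covered by Hoshino's theorem. Your second case (Nakayama, non-local) is fine, and the wsms part and the ``reduction to homogeneous tube'' idea are exactly the paper's. To close the argument you should replace the Happel--Preiser--Ringel invocation with a direct appeal to Hoshino's theorem, as the paper does, or else supply a separate argument for representation-finite non-Nakayama self-injective algebras.
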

\begin{proof}
We have already mentioned that modules in a homogeneous tube are $\tau$-periodic of period $1$.  Here we only need an almost converse of this proved by Hoshino in \cite[Theorem 1]{Hoshino}.
 His result asserts that for a basic (indecomposable) artin algebra $\Lambda$ over algebraically closed field,
 if there is an (indecomposable) module $M$ with $\tau M\cong M$, then either $\Lambda$ is a local Nakayama algebra, or $M$ lies in a homogeneous tube.
 The claim now follows from applying this result to Theorem \ref{main-result-1}.
\end{proof}

We list two other immediate consequences of our main results.
The following one can be regarded as a generalisation of the property of simple modules being string modules for special biseral algebras.

\begin{Cor}\label{coro2} Let $A$ be a self-injective special biserial algebra over an algebraically closed field. Then no band $A$-module can be in an sms of $A$.
\end{Cor}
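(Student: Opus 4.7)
The plan is to reduce Corollary \ref{coro2} directly to Theorem \ref{main-result-1}. The key structural fact I would invoke is that, for a self-injective special biserial algebra over an algebraically closed field, every band module sits in a homogeneous tube of the Auslander-Reiten quiver. This is a classical result going back to the work of Butler--Ringel and Wald--Wasch\-büsch on the representation theory of special biserial algebras, and is conveniently recorded in Erdmann's book on blocks of tame type; it can also be extracted from an analysis of the $\tau$-action on the string/band classification. In particular, every band module $M$ satisfies $\tau M \cong M$, and its AR-component is of the form $\mathbb{Z}A_\infty/\langle\tau\rangle$.

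Given this, the corollary follows in one line: by Theorem \ref{main-result-1}, no indecomposable module lying in a homogeneous tube of $\Gamma_A$ can belong to an sms of $\stmod{A}$, and since every band $A$-module lies in such a tube, no band module can be in an sms.

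The only step that could warrant elaboration in a write-up is a precise citation for the location of band modules in the AR-quiver of a self-injective special biserial algebra; everything else is already packaged by our main theorem. There is no real obstacle here, since the tube structure for band modules over special biserial algebras is well documented in the literature on tame algebras. One minor point to note is that the self-injectivity hypothesis is used implicitly via Theorem \ref{main-result-1}, and it also guarantees that the algebra is biserial (not merely special biserial with one-sided extra generators), so the standard string-and-band description of the indecomposables applies without modification.
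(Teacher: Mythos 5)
Your argument is exactly the paper's: invoke the well-known fact that band modules over a (self-injective) special biserial algebra lie in homogeneous tubes (citing Erdmann's book, as the paper does), and then apply Theorem \ref{main-result-1}. No differences in substance or route.
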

\begin{proof}
It is well-known that any indecomposable module (possibly except for a few indecomposable projective modules) over a special biserial algebra can be described either as a string module or as a band module, and that band modules lie in homogeneous tubes of $\Gamma_A$ (see \cite[Chapter II]{Erdmann}). The result now follows from Theorem \ref{main-result-1}.
\end{proof}

The following consequence of Theorem \ref{main-result-2} seems new.

\begin{Cor}\label{coro3} Let $A$ be a self-injective algebra over an algebraically closed field and $\mathcal{C}$ a stably quasi-serial component of rank $n$. Then any simple module lying in $\mathcal{C}$ has quasi-length less than $n$.
\end{Cor}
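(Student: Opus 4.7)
The plan is to deduce Corollary \ref{coro3} as an essentially immediate specialization of Theorem \ref{main-result-2}. Recall the observation made right after Definition \ref{sms}: the set $\mathcal{S}_{\mathrm{simp}}$ consisting of (isoclass representatives of) the non-projective simple $A$-modules is itself an sms of $\stmod{A}$. Indeed, distinct simples are pairwise stably orthogonal, each simple is a stable brick, and every non-projective indecomposable admits a finite filtration by simple composition factors in the required sense of Definition \ref{sms}.

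Next, I would observe that a simple module $S$ lying in $\mathcal{C}$ and possessing a quasi-length must in fact be non-projective. The reason is purely definitional: quasi-length is assigned only to vertices of the stable part of $\mathcal{C}$, which by definition is obtained by deleting all indecomposable projective-injective vertices; since $A$ is self-injective, projective equals injective, so any simple projective sitting inside $\mathcal{C}$ is removed when passing to the stable part and carries no quasi-length. Consequently, any simple module $S$ in $\mathcal{C}$ to which the conclusion of the corollary applies belongs to $\mathcal{S}_{\mathrm{simp}}$.

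Invoking Theorem \ref{main-result-2} with $\mathcal{S}=\mathcal{S}_{\mathrm{simp}}$ then forces the quasi-length of $S$ to be strictly less than $n$. There is no genuine obstacle in this argument: all the real work is done in proving Theorem \ref{main-result-2}, and Corollary \ref{coro3} simply unpacks what that theorem says for the canonical sms of simple modules. The statement may also be viewed as a refinement of the classical counting bound of \cite{MS} recalled before Theorem \ref{main-result-1} — namely that $\mathcal{C}$ contains at most $n-1$ simple modules — by pinning down precisely where inside the tube the simples are permitted to sit.
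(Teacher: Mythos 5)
Your proof is correct and takes essentially the same route as the paper, which states Corollary \ref{coro3} immediately after Theorem \ref{main-result-2} with no further argument, leaving implicit exactly the specialization you spell out (apply the theorem to the sms of simple modules). Your careful aside about simple projective modules is fine but in fact vacuous here: since $A$ is assumed ring-indecomposable, non-simple, and self-injective, a simple projective module would be projective-injective and would split off as its own block, so none exist.
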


\medskip

\section{Reminders on stably quasi-serial component}\label{sec:remind}
\medskip

For general properties of stable categories for self-injective algebras and of Auslander-Reiten theory we refer to \cite{ARS,Erdmann,Ringel}.

In this section, we recall various notations and results from \cite[Section 2]{EK}, as well as some of their elementary implications that are not stated explicitly in \cite{EK}.

Let us fix a stably quasi-serial component $\mathcal{C}$ of rank $n\geq 1$ of the AR-quiver of $A$, i.e. removing projective modules in $\mathcal{C}$ yields a stable translation quiver $\mathbb{Z}A_\infty/\langle\tau^n\rangle$.

Following \cite{EK}, we can specify modules in $\mathcal{C}$ as follows.
Recall that a sectional path is a path $\cdots M_i \to M_{i+1} \to M_{i+2} \to \cdots$ in the AR-quiver such that $M_i\ncong \tau(M_{i+2})$.
If $X$ is an indecomposable non-projective module lying at the end of $\mathcal{C}$, that is, a {\it quasi-simple} of $\mathcal{C}$,
then for any natural number $r\geq1$, there is a unique infinite sectional path starting at $X$
\[
X = X(1)\rightarrow X(2)\rightarrow \cdots \rightarrow X(r)\rightarrow X(r + 1)\rightarrow \cdots;
\]
dually, there is a unique infinite sectional path in $\mathcal{C}$ ending at $X$:
\[\cdots \rightarrow [r+1]X \rightarrow [r]X \rightarrow \cdots \rightarrow [2]X \rightarrow [1]X=X.
\]
A non-projective module in $\mathcal{C}$ is {\it of quasi-length $r$} if it is of the form $X(r)$ for some quasi-simple $X$ of $\mathcal{C}$.
For notational convenience, we treat $X(0)=[0]X=0$.  Note that if $\mathcal{C}$ is homogeneous, then $X(r)=[r]X$ for all $r\geq 1$; otherwise, $X(r)\cong [r](\tau^{-(r-1)}X)$.

For any integer $i$, we denote by $\bar{i}$ the positive integer in $\{1,\ldots,n\}$ with $i\equiv\bar{i}$ mod $n$.
For convenience, we fix the notation $\{X_{i}\mid i=1, \ldots, n\}$ with $\tau X_{i}=X_{\overline{i-1}}$ for $i\in\{1,\ldots,n\}$ as the set of (non-projective) quasi-simple modules in $\mathcal{C}$.
By \cite[Proposition 2.3, Lemma 2.3.1]{EK}, there is a triangle in $\stmod{A}$ of the form
\begin{equation}\label{ses1}
 X_{i}(l)\xrightarrow{\underline{\epsilon}} X_{i}(l+j) \xrightarrow{\underline{\pi}} \tau^{-l} (X_{i}(j))\rightarrow \Omega^{-1}(X_{i}(l))
\end{equation}
for any $l,j>0, i\in\{1,\ldots,n\}$, where $\epsilon$ (resp. $\pi$) is given by the composition of the irreducible maps on the sectional path starting from $X_i(l)$ (resp. $X_i(l+j)$) and ending at $X_i(l+j)$ (resp. $\tau^{-l}(X_i(j))$).
By the following lemma, both $\underline{\epsilon}$ and $\underline{\pi}$ are non-zero in $\stmod{A}$.

\begin{Lem} $($\cite[Lemma 2.6]{EK}$)$ \label{sectional-path-mor}
The composition $\pi\colon [r]X_i \to [s]X_i$ ($\epsilon\colon \tau^{r-s}[s]X_i \to [r]X_i,$ respectively) of the chain of irreducible maps in a sectional path for $r>s\geq1$ does not factor through a projective module.
In particular, $\sthom_A([r]X_i, [s]X_i)$ and $\sthom_A(\tau^{r-s}[s]X_i, [r]X_i)$ are both non-zero.
\end{Lem}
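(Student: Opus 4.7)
The plan is to deduce both non-vanishings directly from the distinguished triangle (2.1) in $\stmod{A}$, using the standard splitting criterion for triangles together with Krull--Schmidt. The first step is a bookkeeping reduction: convert the lemma's statement into the $X_i(\cdot)$ notation of (2.1). Using $X_i(r)\cong [r](\tau^{-(r-1)}X_i)$, for any prescribed $r>s\geq 1$ one can pick a quasi-simple $Y$ of $\mathcal{C}$ and parameters $l,j$ (explicitly $l=r-s$ and $j=s$, with $Y=\tau^{-(r-1)}X_i$) so that the instance of the triangle (2.1) applied to $Y$ reads
\[
Y(l)\xrightarrow{\underline{\epsilon}} Y(l+j)\xrightarrow{\underline{\pi}} \tau^{-l}Y(j)\to \Omega^{-1}Y(l),
\]
and one checks directly from the two notational conventions that $\underline{\pi}$ here is the sectional-path composition $[r]X_i\to [s]X_i$ of the lemma, while $\underline{\epsilon}$ here is $\tau^{r-s}[s]X_i\to [r]X_i$. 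So it suffices to show that both $\underline{\epsilon}$ and $\underline{\pi}$ in (2.1) are non-zero in $\stmod{A}$.

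The main argument is then formal and uses the standard fact that in any distinguished triangle $A\xrightarrow{f} B\xrightarrow{g} C\xrightarrow{h} A[1]$, the vanishing of any one of $f,g,h$ forces a suitable rotation to have zero connecting morphism, hence to split. If $\underline{\pi}=0$, rotating (2.1) twice makes $\underline{\pi}$ the connecting morphism of the rotated triangle, and splitting yields the isomorphism
\[
X_i(l)\cong \Omega(\tau^{-l}X_i(j))\oplus X_i(l+j) \quad\text{in }\stmod{A}.
\]
Similarly, $\underline{\epsilon}=0$ would give $\tau^{-l}X_i(j)\cong X_i(l+j)\oplus \Omega^{-1}X_i(l)$ in $\stmod{A}$. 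In either case, the left-hand side is an indecomposable non-projective $A$-module, hence indecomposable in $\stmod{A}$, while both summands on the right are non-zero in $\stmod{A}$: the modules $X_i(m)$ and their $\tau$-translates are non-projective for all $m\geq 1$ by the definition of quasi-length, and $\Omega^{\pm 1}$ is an auto-equivalence of $\stmod{A}$. Krull--Schmidt in $\stmod{A}$ then produces the contradiction.

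The main obstacle I anticipate is the bookkeeping required to identify the two parametrizations $X_i(r)$ and $[r]X_i$ of the modules in $\mathcal{C}$ and to verify that the two sectional-path compositions of the lemma genuinely match $\underline{\pi}$ and $\underline{\epsilon}$ of (2.1), rather than differing by an inadvertent $\tau$-shift or direction. One should also confirm that the triangle (2.1), cited from Proposition 2.3 and Lemma 2.3.1 of \cite{EK}, is built independently of the current result so that the argument is not circular; this appears to be the case, as those results construct (2.1) directly from the almost split sequences within $\mathcal{C}$.
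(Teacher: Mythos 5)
The paper does not present its own proof of this lemma: it is imported verbatim from Erdmann--Kerner \cite[Lemma 2.6]{EK}. Your argument supplies a self-contained one. The core idea — reducing to the triangle \eqref{ses1} and observing that if either $\underline{\epsilon}$ or $\underline{\pi}$ vanished, a rotation of that triangle would split and decompose an indecomposable non-projective object of $\stmod{A}$ into two nonzero direct summands — is correct, and is in fact exactly the technique the paper itself employs to prove the non-splitness assertion in Lemma \ref{Lem-Omega-triangle}. Your concern about circularity is well placed but unfounded: the triangle \eqref{ses1} is obtained in \cite{EK} (Proposition 2.3, Lemma 2.3.1) directly from almost split sequences, independently of their Lemma 2.6. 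The remaining ingredients also hold: a non-projective indecomposable module stays indecomposable in $\stmod{A}$ since its stable endomorphism ring is a nonzero quotient of a local ring, hence local, and $\Omega^{\pm1}$ being auto-equivalences guarantees the two putative summands are nonzero, so Krull--Schmidt gives the contradiction.

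There is one bookkeeping slip to repair. From the convention $X(r)\cong [r](\tau^{-(r-1)}X)$ one obtains $[r]X\cong (\tau^{r-1}X)(r)$, so to identify $[r]X_i$ with $Y(l+j)$ and $[s]X_i$ with $\tau^{-l}Y(j)$ in \eqref{ses1} using $l=r-s$ and $j=s$, the quasi-simple must be taken as $Y=\tau^{r-1}X_i$, not $\tau^{-(r-1)}X_i$. This sign does not affect the rest of the argument, since the splitting contradiction is insensitive to the $\tau$-shift of the quasi-simple used to parametrize the triangle.
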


\begin{Rem} \label{dually}
Take $X_j$ to be the quasi-simple with $X_j(r)=[r]X_i$, then the above compositions can also be presented as $\epsilon\colon X_j(s) \to X_j(r)$ and $\pi\colon X_j(r) \to \tau^{s-r}X_j(s)$ respectively.
In particular, we have $\sthom_A(X_j(s), X_j(r))\neq 0$ for any $1\leq s<r$ and any $j$.
\end{Rem}

We have the following generalisation of the triangle \eqref{ses1}.

\begin{Lem} \label{support}
Consider $X_{i}(r)$ with $r\geq1$ and $i\in\{1, 2, \ldots, n\}$. Then there are non-split triangles
\begin{equation}\label{ses2}
 X_{i}(r)\xrightarrow{\begin{scriptsize}\begin{pmatrix}\underline{\epsilon_{1}}\\ \underline{\pi_{1}}  \end{pmatrix}\end{scriptsize}} X_{i}(r+j)\oplus \tau^{-l}(X_{i}(r-l)) \xrightarrow{\begin{scriptsize}\begin{pmatrix}\underline{\pi_{2}},\underline{\epsilon_{2}} \end{pmatrix}\end{scriptsize}} \tau^{-l} (X_{i}(r-l+j))\rightarrow \Omega^{-1}(X_{i}(r))
\end{equation}
with $1\leq l\leq r$ and $j\geq 1$, and $\epsilon_{m},\pi_{m}$ for $m\in\{1,2\}$ are the compositions of irreducible maps in the sectional paths.
\end{Lem}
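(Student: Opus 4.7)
The plan is to combine three instances of the triangle \eqref{ses1} via the octahedral axiom, thereby exhibiting the commutative square
\[
\xymatrix{
X_i(r) \ar[r]^{\ul{\epsilon_1}} \ar[d]_{\ul{\pi_1}} & X_i(r+j) \ar[d]^{\ul{\pi_2}} \\
\tau^{-l}(X_i(r-l)) \ar[r]^{\ul{\epsilon_2}} & \tau^{-l}(X_i(r-l+j))
}
\]
as a homotopy bicartesian square in $\stmod{A}$, from which \eqref{ses2} follows as the associated Mayer--Vietoris triangle.

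Fix $r\geq 1$, $i\in\{1,\ldots,n\}$, $1\leq l\leq r$, and $j\geq 1$. Three applications of \eqref{ses1}, with parameters $(l',j')=(l,r-l)$, $(r,j)$, and $(l,r+j-l)$ respectively, yield
\begin{align*}
(T_1)\colon\;\; & X_i(l) \xrightarrow{\ul{\varepsilon}} X_i(r) \xrightarrow{\ul{\pi_1}} \tau^{-l}(X_i(r-l)) \to \Omega^{-1}(X_i(l)), \\
(T_2)\colon\;\; & X_i(r) \xrightarrow{\ul{\epsilon_1}} X_i(r+j) \xrightarrow{\ul{\pi'}} \tau^{-r}(X_i(j)) \to \Omega^{-1}(X_i(r)), \\
(T_3)\colon\;\; & X_i(l) \xrightarrow{\ul{\epsilon_1\varepsilon}} X_i(r+j) \xrightarrow{\ul{\pi_2}} \tau^{-l}(X_i(r-l+j)) \to \Omega^{-1}(X_i(l)),
\end{align*}
in which every labeled map is a sectional-path composition of irreducible maps and hence non-zero in $\stmod{A}$ by Lemma \ref{sectional-path-mor}.

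Applying the octahedral axiom to the composable pair $\ul{\varepsilon}$ and $\ul{\epsilon_1}$ (whose composite is the first map of $(T_3)$), using $(T_1),(T_2),(T_3)$ as the three edge triangles, produces a fourth triangle
\[
\tau^{-l}(X_i(r-l)) \xrightarrow{\ul{\alpha}} \tau^{-l}(X_i(r-l+j)) \xrightarrow{\ul{\beta}} \tau^{-r}(X_i(j)) \to \Omega^{-1}\tau^{-l}(X_i(r-l)),
\]
together with the compatibilities $\ul{\alpha}\circ\ul{\pi_1}=\ul{\pi_2}\circ\ul{\epsilon_1}$ and $\ul{\beta}\circ\ul{\pi_2}=\ul{\pi'}$. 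Since \eqref{ses1} applied at the quasi-simple $\tau^{-l}X_i$ with parameters $(l,j)$ furnishes a triangle of the identical shape with the sectional-path map $\ul{\epsilon_2}$ in place of $\ul{\alpha}$, the essential uniqueness of cones together with the non-vanishing statements in Lemma \ref{sectional-path-mor} allows us to identify $\ul{\alpha}$ with $\ul{\epsilon_2}$ after absorbing any residual automorphism of the endpoints into the scalars of the constituent irreducible maps; similarly $\ul{\beta}$ is a non-zero sectional-path composition.

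These data now constitute a morphism of triangles $(T_2)\to(T_?)$ (where $(T_?)$ is \eqref{ses1} at $\tau^{-l}X_i$) whose third component is the identity on $\tau^{-r}(X_i(j))$. By the standard Mayer--Vietoris/Verdier construction attached to such a morphism of triangles in any triangulated category, one obtains a triangle
\[
X_i(r) \xrightarrow{\binom{\ul{\epsilon_1}}{\ul{\pi_1}}} X_i(r+j)\oplus\tau^{-l}(X_i(r-l)) \xrightarrow{(\ul{\pi_2},\;\ul{\epsilon_2})} \tau^{-l}(X_i(r-l+j)) \to \Omega^{-1}(X_i(r)),
\]
which is precisely \eqref{ses2} (any inessential sign in the second arrow is absorbed into the choice of irreducibles). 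Non-splitness will follow because both $\ul{\epsilon_1}$ and $\ul{\pi_1}$ are non-zero in $\stmod{A}$ and the indecomposable summands $X_i(r+j),\tau^{-l}(X_i(r-l))$ occupy distinct vertices of $\mathcal{C}$, so the initial arrow admits no retraction. The main obstacle is the identification $\ul{\alpha}=\ul{\epsilon_2}$: the octahedral axiom only delivers $\ul{\alpha}$ abstractly as a morphism between the two cones, and one must appeal to the essential uniqueness of triangles together with Lemma \ref{sectional-path-mor} to recognise it as a rescaled sectional-path composition.
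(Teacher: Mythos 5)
Your approach is genuinely different from the paper's: you work entirely inside the triangulated category $\stmod A$ and build \eqref{ses2} from the octahedral axiom applied to a composition of two sectional-path morphisms, whereas the paper appeals to the module-level construction of \cite[Lemma 2.3]{EK}, which produces the required short exact sequences by splicing Auslander--Reiten sequences so that the displayed maps are \emph{by construction} the sectional-path compositions.

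The categorical route is attractive, but the step you yourself flag as ``the main obstacle'' is a real gap, not a routine bookkeeping issue. The octahedral axiom produces \emph{some} morphism $\ul{\alpha}\colon \tau^{-l}(X_i(r-l))\to\tau^{-l}(X_i(r-l+j))$ whose cone is isomorphic to $\tau^{-r}(X_i(j))$, but ``essential uniqueness of cones'' does not imply that $\ul{\alpha}$ is a scalar multiple of the sectional-path composition $\ul{\epsilon_2}$: two maps between the same pair of indecomposables can have isomorphic cones without being related by automorphisms of the endpoints, and in a tube $\sthom_A(\tau^{-l}X_i(r-l),\tau^{-l}X_i(r-l+j))$ is in general not one-dimensional, so there is no cheap ``one-dimensionality'' argument either. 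If you want to proceed via the morphism of triangles $(T_2)\to(T_?)$ directly, you must first prove that $\ul{\epsilon_2}\,\ul{\pi_1}=\ul{\pi_2}\,\ul{\epsilon_1}$ in $\stmod A$ (the other square commutes for free, since both sides are compositions of the same chain of irreducible maps), and this commutativity of two sectional-path ``rectangles'' is precisely the kind of statement that is clean at the module level but delicate in a purely triangulated argument because of mesh relations. The statement of the lemma does assert that $\epsilon_2$ (and not merely some abstract map) is a sectional-path composition, and this is used downstream (e.g.\ in Lemma \ref{stbrick-induces-hom}), so the identification cannot be waved away. Finally, the case $l=r$ (where $\tau^{-l}(X_i(r-l))=0$ and \eqref{ses2} degenerates to \eqref{ses1}, as the paper notes immediately after the lemma) needs to be treated separately, since your triangles $(T_1)$ and $(T_3)$ require $r-l\geq 1$.
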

\begin{Proof}
This is a straightforward generalisation of the proof in \cite[Lemma 2.3]{EK}.
\end{Proof}
Note that the triangle (\ref{ses1}) is the special case of (\ref{ses2}) for $l=r$.

\begin{Lem}\label{stbrick-induces-hom}
For $r\geq1$ and $i\in\{1, 2,\ldots, n\}$, if $\stend_A(X_{i}(r))\cong k$, then the composition $\underline{\pi_{2}}\underline{\epsilon_{1}}\in\sthom_A(X_{i}(r),\tau^{-l} (X_{i}(r-l+j)))$ of the maps in the sequence \eqref{ses2} is non-zero in $\stmod A$ for $0<l<r$ and $j\geq 1$.
\end{Lem}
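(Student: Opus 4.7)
The plan is to argue by contradiction: assume $\underline{\pi_2}\,\underline{\epsilon_1}=0$ in $\stmod A$, apply $\sthom_A(X_i(r),-)$ to the triangle \eqref{ses2}, and use the hypothesis $\stend_A(X_i(r))\cong k$ to reduce the assumption to the vanishing of $\underline{\pi_1}$, which directly contradicts Lemma \ref{sectional-path-mor}.

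Applying $\sthom_A(X_i(r),-)$ to \eqref{ses2} yields an exact sequence
\[
\stend_A(X_i(r))\xrightarrow{\;(\underline{\epsilon_1},\underline{\pi_1})_\ast\;}\sthom_A(X_i(r),X_i(r+j))\oplus \sthom_A(X_i(r),\tau^{-l}X_i(r-l))\xrightarrow{\;(\underline{\pi_2},\underline{\epsilon_2})_\ast\;}\sthom_A(X_i(r),\tau^{-l}X_i(r-l+j)),
\]
where $(\underline{\pi_2},\underline{\epsilon_2})_\ast(\underline{a},\underline{b})=\underline{\pi_2 a}+\underline{\epsilon_2 b}$. The element $(\underline{\epsilon_1},0)$ is then sent to $\underline{\pi_2\epsilon_1}$; under our assumption, this is zero, so by exactness there exists $\underline{\phi}\in\stend_A(X_i(r))$ with $\underline{\epsilon_1\phi}=\underline{\epsilon_1}$ and $\underline{\pi_1\phi}=0$.

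Now invoke the brick hypothesis: since $\stend_A(X_i(r))\cong k$, we have $\underline{\phi}=c\cdot\underline{\mathrm{id}_{X_i(r)}}$ for some $c\in k$. The first equation gives $c\,\underline{\epsilon_1}=\underline{\epsilon_1}$, and since $\underline{\epsilon_1}\neq 0$ by Lemma \ref{sectional-path-mor} (applicable because $j\geq 1$), we must have $c=1$. Substituting into the second equation produces $\underline{\pi_1}=0$ in $\stmod A$.

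This is the desired contradiction: because $0<l<r$, we have $1\leq r-l<r$, so $\pi_1\colon X_i(r)\to \tau^{-l}X_i(r-l)$ is a composition of irreducible maps along a sectional path of the type covered by Lemma \ref{sectional-path-mor} (see Remark \ref{dually} with $s=r-l$), and hence is nonzero in $\stmod A$. No step here is substantive beyond bookkeeping; the only conceptual point to get right is reading off both coordinates of the preimage $\underline{\phi}$ simultaneously, so that the brick condition pins $\underline{\phi}=\underline{\mathrm{id}}$ from the $X_i(r+j)$-coordinate and then forces the $\tau^{-l}X_i(r-l)$-coordinate to vanish.
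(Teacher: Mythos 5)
Your proposal is correct and follows essentially the same route as the paper: both apply $\sthom_A(X_i(r),-)$ to the triangle \eqref{ses2}, pull $(\underline{\epsilon_1},0)$ back through exactness to an endomorphism of $X_i(r)$, and use $\stend_A(X_i(r))\cong k$ together with $\underline{\epsilon_1}\neq 0$ to force $\underline{\pi_1}=0$, contradicting Lemma~\ref{sectional-path-mor}. You are slightly more explicit than the paper in pinning the scalar to $1$ rather than merely asserting the endomorphism is an isomorphism, but the argument is the same.
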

We remark that when $l=r$, the module $\tau^{-l}(X_{i}(r-l))$ is zero and so the composition $\underline{\pi_{2}}\underline{\epsilon_{1}}$ is zero as we can see from the sequence \eqref{ses1}.
\begin{Proof}
Applying $\sthom_A(X_i(r),-)$ to the triangle \eqref{ses2} yields a long exact sequence
\begin{align*}
\cdots \to \underline{( X_i(r), X_i(r))} &\xrightarrow{(\underline{\epsilon_1},\underline{\pi_1})^t_*} \underline{(X_i(r), X_i(r+j)\oplus X_{\overline{i+l}}(r-l))} \\ &\quad \xrightarrow{(\underline{\pi_2},\underline{\epsilon_2})_*} \underline{(X_i(r), X_{\overline{i+l}}(r-l+j))} \to \cdots.
\end{align*}
Consider the map $\alpha:=(\underline{\epsilon_1},0)^t: X_i(r)\to X_i(r+j)\oplus X_{\overline{i+l}}(r-l)$.
Then we have $(\underline{\pi_2},\underline{\epsilon_2})_*\big(\alpha\big) = (\underline{\pi_2},\underline{\epsilon_2})(\underline{\epsilon_1},0)^t = \underline{\pi_2\epsilon_1}$. Suppose on the contrary that $\underline{\pi_2\epsilon_1}=0$. It then follows from the exactness of the above long exact sequence that there is some $\gamma\in \stend_A(X_i(r))$ such that $(\underline{\epsilon_1},\underline{\pi_1})^t\gamma = (\underline{\epsilon_1},0)^t$.
Since $\stend_A(X_{i}(r))\cong k$, $\gamma$ is an isomorphism.
This means that $\underline{\pi_{1}}=0$ in $\stmod{A}$, which contradicts Lemma \ref{sectional-path-mor}.
\end{Proof}

\section{Proving Theorem \ref{main-result-1}}

We start by showing two easy implications of the results from the previous sections.
These will give strong restrictions on the modules of $\mathcal{C}$ that lie in an sms $\mathcal{S}$.
This in turn will give us some ideas on what sequence of modules we should consider to apply Proposition \ref{main-strat}.
Then the remaining of the section will be devoted to showing the candidate sequence satisfies the conditions of Proposition \ref{main-strat}.

\begin{Lem}\label{r<n}
$X_i(r)$ is not a stable brick for all $r>n$.
In particular, such a module cannot be a member of an sms of $\stmod{A}$.
\end{Lem}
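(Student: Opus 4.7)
The plan is to argue by contradiction: assume that $X_i(r)$ is a stable brick for some $r>n$, and derive that $X_i(r)\cong X_i(r+n)$ as $A$-modules, which is absurd. The driving observation is that the rank-$n$ periodicity of $\mathcal{C}$ (namely $\tau^{-n}$ acts as the identity on the stable part) causes a suitable instance of the triangle from Lemma \ref{support} to ``close up" into a triangle in $\stmod{A}$ with $X_i(r)$ sitting in both outer positions; Lemma \ref{stbrick-induces-hom} will then produce a non-scalar stable endomorphism of $X_i(r)$.

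Concretely, I would apply Lemma \ref{support} with $l=j=n$: since $r>n\geq 1$, the hypotheses $1\leq l\leq r$ and $j\geq 1$ are met. Using the identifications $\tau^{-n}X_j(s)=X_{\overline{j+n}}(s)=X_j(s)$ coming from $\overline{j+n}=j$, the triangle \eqref{ses2} becomes
\[
X_i(r)\xrightarrow{(\underline{\epsilon_1},\underline{\pi_1})^t} X_i(r+n)\oplus X_i(r-n)\xrightarrow{(\underline{\pi_2},\underline{\epsilon_2})} X_i(r)\to \Omega^{-1}(X_i(r)).
\]

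Under the stable-brick assumption $\stend_A(X_i(r))\cong k$, Lemma \ref{stbrick-induces-hom} applies (with the same $l=j=n$, which is valid since $0<n<r$) and yields that the composition $\underline{\pi_2\epsilon_1}\colon X_i(r)\to X_i(r+n)\to X_i(r)$ is a non-zero element of $\stend_A(X_i(r))$. As this endomorphism ring is one-dimensional, $\underline{\pi_2\epsilon_1}=c\cdot\underline{\mathrm{id}}_{X_i(r)}$ for some $c\in k^\times$, so $\mathrm{id}_{X_i(r)}$ factors in $\mathrm{mod}\,A$ through $X_i(r+n)\oplus P$ for some projective $P$. Krull--Schmidt together with the indecomposability and non-projectivity of $X_i(r)$ then forces $X_i(r)\cong X_i(r+n)$ as $A$-modules, contradicting the fact that distinct vertices of $\mathbb{Z}A_\infty/\langle\tau^n\rangle$ correspond to non-isomorphic indecomposable modules. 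The ``in particular" clause is then immediate from Definition \ref{sms}(1), since every member of an sms is a stable brick.

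No step is conceptually difficult; the main point of care is the choice $(l,j)=(n,n)$, which is precisely what makes the rank-$n$ periodicity close the triangle from Lemma \ref{support} into a palindromic shape and thereby convert $\pi_2\epsilon_1$ into a stable endomorphism of $X_i(r)$. Once that setup is in place, Lemmas \ref{support} and \ref{stbrick-induces-hom} together with a standard Krull--Schmidt argument finish the job.
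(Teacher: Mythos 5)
Your proof is correct, and it uses the same key construction as the paper: applying Lemma \ref{stbrick-induces-hom} with $(l,j)=(n,n)$ and exploiting the rank-$n$ periodicity $\tau^{-n}=\mathrm{id}$ so that the composite $\underline{\pi_2\epsilon_1}$ becomes a non-zero stable endomorphism of $X_i(r)$. Where you diverge from the paper is in the final step of extracting the contradiction. The paper argues directly from the radical: $\underline{\pi_2}$ and $\underline{\epsilon_1}$ are non-zero compositions of irreducible maps and hence lie in the radical of $\stmod{A}$, so $\underline{\pi_2\epsilon_1}$ cannot be invertible, contradicting the fact that any non-zero element of $\stend_A(X_i(r))\cong k$ is a non-zero scalar multiple of the identity. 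You instead promote $\underline{\pi_2\epsilon_1}=c\cdot\underline{\mathrm{id}}$ to a factorization of $\mathrm{id}_{X_i(r)}$ through $X_i(r+n)\oplus P$ in $\mathrm{mod}\,A$, and invoke Krull--Schmidt to force $X_i(r)\cong X_i(r+n)$, which is impossible since they have different quasi-lengths. Both finishes are valid; the paper's radical argument is shorter and avoids passing back to $\mathrm{mod}\,A$, while yours is more explicit and produces a concrete module-level absurdity. Either way the ``in particular'' clause follows at once from Definition \ref{sms}(1), as you note.
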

\begin{Proof}
The first statement is a direct consequence of \cite[Lemma 3.5.1]{EK}, but we can also present the proof easily as follows. Consider $r>n$ and assume $\stend_A(X_i(r))\cong k$.
By taking $(l,j)=(n,n)$ in Lemma \ref{stbrick-induces-hom}, implies that map $\underline{\pi_2\epsilon_1}$ (with notation as in Lemma \ref{stbrick-induces-hom}) is a non-zero endomorphism of $X_i(r)$ in $\stmod{A}$.
By Lemma \ref{sectional-path-mor}, $\underline{\pi_2}, \underline{\epsilon_1}$ are non-zero morphisms in the radical of $\stmod{A}$, $\underline{\pi_2\epsilon_1}$ cannot be the identity; this contradicts the assumption that $\stend_A(X_i(r))\cong k$.
\end{Proof}

\begin{Lem}\label{|S|<n}
If $X_{i}(r)$ belongs to some stable semibrick $\mathcal{S}$ of $\stmod{A}$ for some $i\in\{1,\ldots, n\}$ and $r>1$, then the cardinality of $\mathcal{S}\cap\mathcal{C}$ is strictly less than $n$.
\end{Lem}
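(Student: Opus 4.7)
The plan is to exploit the fact that each ray of $\mathcal{C}$ can host at most one member of a stable semibrick, and then to pin down one specific ray which is entirely forbidden. Since $\mathcal{C}$ has exactly $n$ rays, this will bound $|\mathcal{S} \cap \mathcal{C}|$ by $n-1 < n$.

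For the first step, I would observe that by Remark \ref{dually}, any two distinct modules $X_{i'}(s)$ and $X_{i'}(s')$ on the same ray $\mathcal{R}_{i'} := \{X_{i'}(s) : s \geq 1\}$ satisfy $\sthom_A(X_{i'}(s), X_{i'}(s')) \neq 0$ (say, for $s < s'$). Hence the stable semibrick condition forces at most one member of $\mathcal{S}$ to lie on each such ray.

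The second (and main) step is to show that the whole ray $\mathcal{R}_{\overline{i+r-1}}$ is disjoint from $\mathcal{S}$. For this I would establish $\sthom_A(X_i(r), X_{\overline{i+r-1}}(s)) \neq 0$ for every $s \geq 1$, and then invoke orthogonality to rule these modules out of $\mathcal{S}$. For $s = 1$, the required non-zero map is the composition of irreducible maps along the sectional path $X_i(r) = [r]X_{\overline{i+r-1}} \to \cdots \to X_{\overline{i+r-1}}(1)$, which is non-zero in $\stmod A$ by Lemma \ref{sectional-path-mor}. For $s \geq 2$, applying Lemma \ref{stbrick-induces-hom} to $X_i(r)$ with $l = r-1$ and $j = s-1$ yields the non-zero composition $\underline{\pi_2\epsilon_1} \colon X_i(r) \to \tau^{-(r-1)}(X_i(s)) = X_{\overline{i+r-1}}(s)$, which crucially uses that $X_i(r)$ is a stable brick.

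The only subtle point is to confirm that $\mathcal{R}_{\overline{i+r-1}}$ is genuinely distinct from $\mathcal{R}_i$, since otherwise the membership $X_i(r) \in \mathcal{R}_i \cap \mathcal{S}$ would contradict our blocking argument. Here the hypotheses $r > 1$ and $r \leq n$ (the latter coming from Lemma \ref{r<n}, as $X_i(r) \in \mathcal{S}$ is a stable brick) conspire to give $1 \leq r-1 \leq n-1$, whence $\overline{i+r-1} \neq i$. Once this cyclic-index bookkeeping is in place, the conclusion $|\mathcal{S}\cap\mathcal{C}| \leq n-1$ follows by summing the contribution of at most one module from each of the remaining $n-1$ rays.
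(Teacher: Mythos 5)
Your proof is correct and follows essentially the same approach as the paper's: block the entire ray $\mathcal{R}_{\overline{i+r-1}}$ via Lemma \ref{stbrick-induces-hom} (with $l=r-1$) together with Lemma \ref{sectional-path-mor}, use the index bound $1\leq r-1\leq n-1$ from $r>1$ and Lemma \ref{r<n} to see that this ray is distinct from $\mathcal{R}_i$, and then count at most one $\mathcal{S}$-module per remaining ray. The only cosmetic difference is the final tally — you count at most one module from each of the $n-1$ unblocked rays, while the paper counts $X_i(r)$ separately plus at most one from each of the other $n-2$ rays — but the two counts are the same.
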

\begin{Proof}
Suppose we have $X_i(r)\in \mathcal{S}$ with $\mathcal{S}$ an sms of $\stmod{A}$ and $r>1$.
It follows from Lemma \ref{r<n} that we can assume $r\leq n$.

On one hand, taking $l=r-1$ in Lemma \ref{stbrick-induces-hom} we get (together with Lemma \ref{sectional-path-mor}) that $\sthom_A(X_{i}(r), X_{\overline{i+r-1}}(j+1))$ is non-zero for all $j\geq 0$.
Hence, by the orthogonality condition, none of the modules in the sectional path starting from $X_{\overline{i+r-1}}$ can be in $\mathcal{S}$.

On the other hand, by Lemma \ref{sectional-path-mor}, it follows from orthogonality condition that each sectional path starting at $X_k$ with $k\in\{1,\ldots, n\}$ can only have at most one module in $\mathcal{S}$.
In particular, $X_i(s)$ cannot be in $\mathcal{S}$ for all $s\neq r$.

The assumption of $r\leq n$ implies that $\overline{i+r-1}\neq i$, so the previous paragraphs amount to say that modules in $\mathcal{C}\cap (\mathcal{S}\setminus\{X_i(r)\})$ must come from the $n-2$ sectional paths starting at $X_j$ for $j\neq i, \overline{i+r-1}$, each of which has at most one module in $\mathcal{S}$; that is, $|\mathcal{C}\cap \mathcal{S}|\leq n-2+1=n-1$, as required.
\end{Proof}

It now follows from Lemma \ref{|S|<n} that we only need to consider
\[
\text{a stable semibrick $\mathcal{S}$ that contains all the quasi-simples }
\{ X_1, X_2, \ldots, X_n \}\text{ of }\mathcal{C}.
\]

\begin{Ex}
Note that for any $n\geq 1$, there are abundance of examples where the set of quasi-simples form a stable semibrick.
For example, consider the algebra used in Remark \ref{rem:bdd} with $n> 1$, then there is such a component whose quasi-simples are given by the simple modules $S_i$ with $i\in\{1, 2, \ldots, n-1\}$ and a length 2 module $M$ whose top is $S_{n+1}$ and socle is $S_n$.
This example works in the case of $n=1$, too.
Namely, we have $A$ the trivial extension algebra of the Kronecker algebra $K:=k(\xymatrix@1{2\ar@<3pt>[r] \ar@<-3pt>[r]& 1})$, and the length 2 module $M$ is the regular $K$-module (regarded as an $A$-module) with parameter $(1:0)\in\mathbb{P}^1_k$ - this is clearly a stable brick lying at the end of a homogeneous tube.
\end{Ex}

In view of the conditions of Proposition \ref{main-strat}, we want to find a sequence of indecomposable non-projective modules such that each one of them have a non-zero morphism in $\stmod{A}$ to one of the quasi-simples.
The following lemma gives us one possible candidate.

\begin{Lem}  \label{Lem-Omega-triangle}
For every positive integer $r\geq 1$ and $i\in \{1,\ldots,n\}$, there is a non-split triangle in $\stmod{A}$:
\begin{equation}\label{omega-triangle}
\Omega(X_{i}(r+1)) \to \Omega(X_{\overline{i+1}}(r))\to X_{i}\to.
\end{equation}
\end{Lem}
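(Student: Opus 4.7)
The plan is to obtain the desired triangle by rotating a suitable instance of the basic triangle \eqref{ses1} established earlier in the section. The key observation is that the target triangle is simply a backward rotation of a specialisation of \eqref{ses1} after an appropriate $\Omega$-shift, so once the right parameters are chosen, everything falls into place automatically.

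First, I would specialise $l=1$ and $j=r$ in \eqref{ses1}. Since $\tau X_{\overline{i+1}}=X_i$, we have $\tau^{-1}(X_i(r))=X_{\overline{i+1}}(r)$, and therefore \eqref{ses1} supplies the triangle
$$X_i \xrightarrow{\underline{\epsilon}} X_i(r+1) \xrightarrow{\underline{\pi}} X_{\overline{i+1}}(r) \to \Omega^{-1}(X_i),$$
where $\epsilon$ and $\pi$ are the compositions of irreducible maps along the sectional paths $X_i(1)\to\cdots\to X_i(r+1)$ and $X_i(r+1)\to\cdots\to X_{\overline{i+1}}(r)$ respectively.

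Next, I would rotate this triangle twice backwards in the triangulated category $\stmod{A}$; the result is
$$\Omega(X_i(r+1)) \to \Omega(X_{\overline{i+1}}(r)) \to X_i \xrightarrow{\underline{\epsilon}} X_i(r+1),$$
which, upon stripping the final shift, is precisely the triangle claimed in the lemma, with connecting morphism $\underline{\epsilon}$.

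Finally, for non-splitness I would argue that a triangle in $\stmod{A}$ splits if and only if its connecting morphism vanishes. Here the connecting morphism is the map $\underline{\epsilon}\colon X_i\to X_i(r+1)$, a composition of irreducible maps along a sectional path of length $r$ in $\mathcal{C}$. By Lemma \ref{sectional-path-mor} (see also Remark \ref{dually}), this composition does not factor through any projective module, hence is non-zero in $\stmod{A}$. Consequently the triangle does not split. I do not anticipate any real obstacle: the proof is essentially bookkeeping, namely identifying Lemma \ref{Lem-Omega-triangle} as a rotation of a specific case of \eqref{ses1} and invoking Lemma \ref{sectional-path-mor} to certify that the connecting morphism is non-zero.
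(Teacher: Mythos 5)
Your proof is correct and follows the same overall route as the paper (rotate a specialisation of the basic triangle~\eqref{ses1} and check the connecting morphism). Two small points are worth noting. First, you take $(l,j)=(1,r)$, whereas the paper writes $(l,j)=(r,1)$; the triangle the paper actually displays, $X_i\to X_i(r+1)\to X_{\overline{i+1}}(r)\to\Omega^{-1}(X_i)$, is the one produced by $(l,j)=(1,r)$, so the paper has a typo there and your choice of parameters is the right one. Second, you establish non-splitness by observing that the connecting morphism of the rotated triangle is $\underline{\epsilon}\colon X_i\to X_i(r+1)$, which is non-zero by Lemma~\ref{sectional-path-mor}; the paper instead argues about the morphism $\Omega(X_{\overline{i+1}}(r))\to X_i$, noting that if it were zero then its cone would be $X_i\oplus X_{\overline{i+1}}(r)$, contradicting the indecomposability of $X_i(r+1)$. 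Both arguments are valid (and in fact indecomposability of the middle term alone already forces non-splitness here), but your identification of the connecting morphism as $\underline{\epsilon}$ is a clean and direct way to close the argument, and you have correctly tracked the maps through the two backward rotations.
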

\begin{Proof}
Taking $(l,j)=(r,1)$ in the non-split triangle \eqref{ses1} yields
\[
X_{i}\xrightarrow{\underline{\epsilon}} X_{i}(r+1) \xrightarrow{\underline{\pi}} X_{\overline{i+1}}(r)\to\Omega^{-1}(X_{i}),
\]
which can then be rotated to form a non-split triangle with terms agreeing those in the triangle \eqref{omega-triangle} of the claim.  So it remains to show that the connecting morphism $\Omega(X_{\overline{i+1}}(r))\to X_{i}$ of the above triangle is non-zero.
Suppose the contrary, then the cone of the zero connecting morphism will be $X_{\overline{i+1}}(r)\oplus X_{i}\ncong X_{i}(r+1)$, hence a contradiction.
\end{Proof}

This means that our candidate sequence should be
\begin{align}\label{main-seq1}
(M_r)_{r\geq 1} \;\;\text{ with }\;\; M_r:= \Omega( X_{\overline{i-r}}(r+1)),
\end{align}
for any fixed $i\in\{1, 2, \ldots, n\}$.

\begin{Lem}\label{condi}
The sequence \eqref{main-seq1} satisfies condition (i) of Proposition \ref{main-strat}.
\end{Lem}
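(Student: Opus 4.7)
The plan is to produce, for every $r\ge 1$, a non-zero stable morphism from $M_r$ to a quasi-simple of $\mathcal{C}$ by a reindexing of Lemma \ref{Lem-Omega-triangle}. Since by the remark after Lemma \ref{|S|<n} we may assume $\{X_1,\ldots,X_n\}\subseteq \mathcal{S}$, such a morphism together with the orthogonality axiom of a stable semibrick will force $M_r\notin \mathcal{S}$, provided one can rule out $M_r$ being isomorphic to the target quasi-simple.

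First I would specialise Lemma \ref{Lem-Omega-triangle}, replacing its parameter ``$i$'' by $\overline{i-r-1}$ and its ``$r$'' by $r+1$. Direct substitution shows that the first syzygy term is $\Omega(X_{\overline{i-r-1}}(r+2))=M_{r+1}$, the second is $\Omega(X_{\overline{i-r}}(r+1))=M_r$, and the third is the quasi-simple $X_{\overline{i-r-1}}\in\mathcal{S}$. This yields a non-split triangle
$$M_{r+1}\longrightarrow M_r \xrightarrow{\underline{f}} X_{\overline{i-r-1}}\longrightarrow \Omega^{-1}(M_{r+1})$$
in $\stmod{A}$, and non-splitness of the triangle forces $\underline{f}\neq 0$.

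Second, I would apply the orthogonality condition of Definition \ref{stbrick}: if $M_r\cong S$ for some $S\in \mathcal{S}$, then either $M_r\cong X_{\overline{i-r-1}}$ or $\sthom_A(M_r,X_{\overline{i-r-1}})=0$, the latter contradicting $\underline{f}\neq 0$.

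The only delicate step is ruling out $M_r\cong X_{\overline{i-r-1}}$. For this I would use that $\Omega$ is a triangulated autoequivalence of $\stmod{A}$ commuting with $\tau$, hence induces a translation-preserving automorphism of the stable AR-quiver that preserves quasi-length on any stably quasi-serial component. Consequently $M_r$ has quasi-length $r+1\geq 2$ in the component containing it, whereas $X_{\overline{i-r-1}}$ is a quasi-simple (quasi-length $1$), so $M_r\not\cong X_{\overline{i-r-1}}$. Apart from this quasi-length invariance under $\Omega$, the argument is a clean reindexing of Lemma \ref{Lem-Omega-triangle} and presents no real obstacle.
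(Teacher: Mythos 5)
Your proof is correct and follows essentially the same route as the paper's: reindex the triangle of Lemma \ref{Lem-Omega-triangle} to get a non-zero stable morphism $M_r \to X_{\overline{i-r-1}}$, invoke orthogonality of the semibrick $\mathcal{S}$ (which contains all quasi-simples), and use that $\Omega$ preserves quasi-length to exclude $M_r$ from being isomorphic to a quasi-simple (and to get pairwise non-isomorphism of the $M_r$). The only cosmetic difference is that you split the cases as ``$S = X_{\overline{i-r-1}}$ versus $S \neq X_{\overline{i-r-1}}$'' whereas the paper splits as ``$S$ a quasi-simple versus not,'' but both are handled by the same two ingredients.
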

\begin{Proof}
From the triangle \eqref{omega-triangle} of Lemma \ref{Lem-Omega-triangle}, we see that $\sthom_A(\Omega(X_{\overline{i-r}}(r)), X_{\overline{i-r-1}})\neq 0$, so $\Omega(X_{\overline{i-r}}(r))$ can not be isomorphic to any $S\in \mathcal{S}\setminus\{ X_1, X_2, \ldots, X_n\}$.

Moreover, by the difference in quasi-lengths, this sequence is clearly pairwise non-isomorphic, and none of them is isomorphic to the quasi-simples $X_1, X_2, \ldots, X_n$ of $\mathcal{C}$.
\end{Proof}

The remaining of this subsection is devoted to show that the sequence \eqref{main-seq1} satisfies the condition (ii) in Proposition \ref{main-strat}, namely, to show that the triangle obtained in Lemma \ref{omega-triangle} are the only ones we need.
For this purpose, we first observe the following behaviour.

\begin{Lem}  \label{orthogonality}
Consider the set $\{X_{i}\mid i=1, 2,\ldots, n\}$ of quasi-simples of $\mathcal{C}$.
If $S$ is a non-projective indecomposable module, then we have the following.
 \begin{enumerate}[$(a)$]
\item If $\sthom_A(X_{i},S)=0$ for all $i\in\{1,\ldots,n\}$, then $\sthom_A(X_{i}(r),S)=\sthom_A([r]X_{i},S)=0$ for all $r\geq 1, i\in\{1,\ldots,n\}$.
\item If $\sthom_A(S,X_{i})=0$ for all $i\in\{1,\ldots,n\}$, then $\sthom_A(S,X_{i}(r))=\sthom_A(S,[r]X_{i})=0$ for all $r\geq 1, i\in\{1,\ldots,n\}$.
 \end{enumerate}
\end{Lem}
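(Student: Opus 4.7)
The plan is to prove part (a) by induction on the quasi-length $r$, feeding the non-split triangle \eqref{ses1} with $l = r,\ j = 1$ into the cohomological functor $\sthom_A(-, S)$. The base case $r = 1$ is the hypothesis, since $X_i(1) = [1]X_i = X_i$. For the inductive step, \eqref{ses1} supplies the triangle
\[
X_i(r) \xrightarrow{\underline{\epsilon}} X_i(r+1) \xrightarrow{\underline{\pi}} \tau^{-r}X_i \to \Omega^{-1}(X_i(r)),
\]
which yields the exact piece
\[
\sthom_A(\tau^{-r} X_i,\, S) \to \sthom_A(X_i(r+1),\, S) \to \sthom_A(X_i(r),\, S).
\]
The rightmost term vanishes by the inductive hypothesis, while the leftmost term vanishes because $\tau^{-r} X_i = X_{\overline{i+r}}$ is itself one of the quasi-simples $X_1, \ldots, X_n$, so the middle term is zero.

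For part (b), I would run a simultaneous induction on $r$ over all $i \in \{1, \ldots, n\}$, this time applying \eqref{ses1} with $l = 1,\ j = r$:
\[
X_i \xrightarrow{\underline{\epsilon}} X_i(r+1) \xrightarrow{\underline{\pi}} \tau^{-1}(X_i(r)) \to \Omega^{-1}(X_i),
\]
and feeding it into $\sthom_A(S, -)$. The term $\sthom_A(S, X_i)$ is zero by hypothesis, and the term $\sthom_A(S, \tau^{-1}X_i(r))$ coincides with $\sthom_A(S, X_{\overline{i+1}}(r))$, since $\tau$ preserves quasi-length inside the tube and acts on quasi-simples by $\tau^{-1} X_i = X_{\overline{i+1}}$; this vanishes by the inductive hypothesis at step $r$ for the index $\overline{i+1}$.

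The claims about $[r]X_i$ reduce to the $X_j(r)$ case: traversing the sectional path of length $r-1$ ending at $X_i$ identifies $[r]X_i \cong X_{\overline{i-r+1}}(r)$, and the hypothesis on the full set $\{X_1, \ldots, X_n\}$ is invariant under reindexing, so the $[r]X_i$ conclusions are immediate corollaries of what has already been established.

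I do not expect a serious obstacle here: the argument is essentially a mechanical induction once one chooses the right specialisations $(l,j) = (r,1)$ and $(1,r)$ of \eqref{ses1} so that a single quasi-simple appears as the third term of the triangle. The one mild point of care is that part (b) must be carried out simultaneously for every $i$, so that the $\tau^{-1}$-shift $X_{\overline{i+1}}(r)$ appearing in the long exact sequence is itself covered by the inductive hypothesis at level $r$.
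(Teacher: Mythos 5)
Your proof is correct and follows essentially the same route as the paper: the paper runs the same induction, specialising the triangle \eqref{ses1} at $(l,j)=(r-1,1)$ rather than $(r,1)$ (a cosmetic reindexing) and applying the cohomological functor $\sthom_A(-,S)$, then dismisses part (b) and the $[r]X_i$ case as dual/similar, where you spell them out; your reduction of $[r]X_i$ to $X_{\overline{i-r+1}}(r)$ is exactly the observation $X_i(r)\cong [r]X_{\overline{i+r-1}}$ that the paper records just before the lemma.
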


\begin{Proof}
We only prove (a), since (b) is dual. We prove it by induction on $r$. Fix some $i$ in $\{1,\ldots,n\}$.
For $r=1$, $X_{i}(r)=X_{i}$ and the statement follows by the assumption that $\sthom_A(X_{i},S)= 0$.
Taking $(l,j)=(r-1,1)$ in the sequence \eqref{ses1} yields
\begin{equation}\label{ses3}
 X_{i}(r-1) \xrightarrow{} X_{i}(r) \xrightarrow{} \tau^{-(r-1)} X_{i}  \xrightarrow{}  \Omega^{-1}( X_{i}(r-1)),
\end{equation}
and we obtain a long exact sequence by applying $\sthom_A(-,S)$ to it.
By induction hypothesis we have $\sthom_A(X_i(r-1), S)=0$.
Since $\tau^{-(r-1)}X_{i}= X_{\overline{i+r-1}}$ by definition, it follows from the assumption that $\sthom_A(\tau^{-(r-1)}X_i, S)=0$.
Hence, $\sthom_A(X_i(r), S)=0$ follows from the exactness of the long exact sequence.  The proof for the statement on $\sthom_A([r]X_i, S)$ is similar.
\end{Proof}

Let $\nu=D\mathrm{Hom}_A(-,A)$ denote the Nakayama functor.  Then we have for all $M,N\in\stmod A$ the bifunctorial isomorphisms:
\begin{equation}\label{serre-duality}
\sthom_A(M,N) \cong D\mathrm{Ext}_A^1(N,\tau M) \cong D\sthom_A(N,\nu\Omega M),
\end{equation}
where the first isomorphism is just the Auslander-Reiten duality and the second isomorphism follows from the well-known fact that $\tau\cong\nu\Omega^2$.
Note that this shows that we have Serre duality in the triangulated category $\stmod{A}$, where the Serre functor is $\nu\Omega$.

\begin{Lem}  \label{Omega-orthogonality}
If $S$ is a non-projective indecomposable module with $\sthom_A(S, X_{i})=0$ for all $i\in\{1,\ldots,n\}$, then $\sthom_A(\Omega(X_{i}(r)),S)= 0$ for all $r\geq 1$ and for all $i\in\{1,\ldots,n\}$.
In particular, any triangle of the form $N\to M_r\to S\to $ splits for all $S\in \mathcal{S}\setminus\{X_1, X_2, \ldots, X_n\}$.
\end{Lem}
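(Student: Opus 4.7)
The plan is to derive both parts of the lemma directly from Serre duality \eqref{serre-duality} combined with Lemma \ref{orthogonality}(b); no substantially new input is required.

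For the vanishing $\sthom_A(\Omega(X_i(r)), S) = 0$, the key step is to move the $\Omega$ to the opposite Hom-argument via Serre duality. Applying the second isomorphism in \eqref{serre-duality} with $M = \Omega(X_i(r))$ and $N = S$, and using $\tau \cong \nu\Omega^2$, gives
\[
\sthom_A(\Omega(X_i(r)),\, S) \;\cong\; D\sthom_A(S,\, \nu\Omega \cdot \Omega(X_i(r))) \;=\; D\sthom_A(S,\, \tau(X_i(r))).
\]
I would then observe that $\tau$ shifts quasi-simples by $\tau X_j = X_{\overline{j-1}}$ and preserves quasi-length along $\mathcal{C}$, so $\tau(X_i(r)) = X_{\overline{i-1}}(r)$. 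The hypothesis $\sthom_A(S, X_j) = 0$ for all $j \in \{1,\ldots,n\}$ is precisely what is needed to invoke Lemma \ref{orthogonality}(b), which upgrades this vanishing to $\sthom_A(S, X_{\overline{i-1}}(r)) = 0$ for every $r \geq 1$. Dualising completes the first claim.

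For the ``in particular'' part, I would use the standing reduction that $\mathcal{S}$ contains every quasi-simple $X_1,\ldots,X_n$ of $\mathcal{C}$ (justified by Lemma \ref{|S|<n}). Then for any $S \in \mathcal{S} \setminus \{X_1,\ldots,X_n\}$, the orthogonality condition of the stable semibrick forces $\sthom_A(S, X_j) = 0$ for every $j$, which is exactly the hypothesis of the first part. Applied with the quasi-simple $X_{\overline{i-r}}$ and quasi-length $r+1$, the first part yields $\sthom_A(M_r, S) = 0$. Hence every morphism $M_r \to S$ is zero in $\stmod{A}$, so the middle map in any triangle $N \to M_r \to S \to$ vanishes, which is the desired splitting.

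I do not foresee a substantive obstacle: the entire argument amounts to converting the input vanishing $\sthom_A(S, X_j) = 0$ into the output vanishing $\sthom_A(\Omega(X_i(r)), S) = 0$ via Serre duality, with the tube-translation identity $\tau(X_i(r)) = X_{\overline{i-1}}(r)$ being immediate from the structure of $\mathbb{Z}A_\infty/\langle\tau^n\rangle$ and Lemma \ref{orthogonality}(b) doing the heavy lifting of propagating vanishing from the quasi-simples to all quasi-lengths.
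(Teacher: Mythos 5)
Your argument is correct and coincides with the paper's own proof: both apply Serre duality \eqref{serre-duality} to reduce $\sthom_A(\Omega(X_i(r)),S)$ to $\sthom_A(S,\tau X_i(r))=\sthom_A(S,X_{\overline{i-1}}(r))$, which then vanishes by Lemma \ref{orthogonality}(b). The ``in particular'' reasoning via the standing reduction that $\mathcal{S}\supseteq\{X_1,\ldots,X_n\}$ is also exactly how the paper intends this consequence to be read.
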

\begin{Proof}
Take $M,N$ as $\Omega(X_{i}(r)), S$ respectively in (\ref{serre-duality}), then we get that
\[
\dim\sthom_A(\Omega(X_{i}(r)),S) = \dim\sthom_A(S,\nu\Omega(\Omega X_{i}(r))).
\]
Since $\nu\Omega^2 X_{i}(r)\cong \tau X_{i}(r)\cong X_{\overline{i-1}}(r)$, the later space is just $\sthom_A(S,X_{\overline{i-1}}(r))$, which is zero by Lemma \ref{orthogonality}.
\end{Proof}

Now, it remains to show that the triangles \eqref{omega-triangle} are the only ones induced by a morphism in $\stmod{A}$ from $M_r$ to the quasi-simples of $\mathcal{C}$ (hence, to the elements of $\mathcal{S}$).

\begin{Lem}  \label{dimensionformula2}
Suppose that the quasi-simples $\{X_i\mid i=1,\ldots, n\}$ in $\mathcal{C}$ is a stable semibrick.
For all $i,j\in\{1,\ldots,n\}$ and for all $r\geq 1$, we have the following.

\begin{enumerate}[(i)]
\item $\sthom_A(X_{j},X_{i}(r))\cong k\delta_{i,j}$ and $\sthom_A([r]X_{i},X_{j})\cong k\delta_{i,j}$;

\item $\sthom_A(\Omega(X_{\overline{i+1}}(r)),X_{j})\cong k\delta_{i,j}$,
\end{enumerate}
where $k\delta_{i,j}=k$ if $i=j$, zero otherwise.
\end{Lem}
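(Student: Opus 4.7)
The strategy is to prove the two equations in (i) by induction on $r$, then deduce (ii) from the first equation via Serre duality.

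For the first equation of (i), the base case $r=1$ follows from the semibrick hypothesis. For the inductive step, I apply $\sthom_A(X_j, -)$ to the triangle from \eqref{ses1} with $(l,j)=(1, r-1)$,
\[
X_i \to X_i(r) \to X_{\overline{i+1}}(r-1) \to \Omega^{-1}X_i,
\]
producing a long exact sequence whose neighbouring dimensions are $\delta_{i,j}$ (semibrick), $\delta_{\overline{i+1},j}$ (induction), and $\delta_{\overline{i+1}, j}$ via Serre duality \eqref{serre-duality}, since $\sthom_A(X_j, \Omega^{-1}X_i) \cong D\sthom_A(X_i, \tau X_j) = D\sthom_A(X_i, X_{\overline{j-1}})$. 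The cases $j \notin \{i, \overline{i+1}\}$ are immediate; for $j = i$ the LES forces $\dim \leq 1$ while Remark \ref{dually} supplies the matching lower bound.

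The delicate case is $j = \overline{i+1}$, where the LES collapses to $0 \to \sthom_A(X_j, X_i(r)) \to k \xrightarrow{\partial} k$, with $\partial$ post-composition by the (non-zero) connecting morphism $\delta$ of the triangle, applied to the sectional generator $\epsilon: X_{\overline{i+1}} \to X_{\overline{i+1}}(r-1)$. I plan to use the naturality of AR duality to identify the induced pullback $\epsilon^*: \mathrm{Ext}^1_A(X_{\overline{i+1}}(r-1), X_i) \to \mathrm{Ext}^1_A(X_{\overline{i+1}}, X_i)$ with the $k$-linear dual of the push-forward $(\tau\epsilon)_*: \sthom_A(X_i, X_i) \to \sthom_A(X_i, X_i(r-1))$, both sides being one-dimensional. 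Since $(\tau\epsilon)_*(\mathrm{id}_{X_i}) = \tau\epsilon$ is non-zero by Lemma \ref{sectional-path-mor} applied to the sectional path $X_i \to X_i(r-1)$, the map $(\tau\epsilon)_*$ and hence $\epsilon^*$ are isomorphisms, forcing $\partial \neq 0$. (For $n=1$ the cases $j=i$ and $j=\overline{i+1}$ coincide, but the combined argument still applies.)

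For the second equation of (i), I run an analogous induction by applying $\sthom_A(-, X_j)$ to the triangle from \eqref{ses1} (with $X_{i'} = X_{\overline{i-r+1}}$ and $(l,j)=(1,r-1)$),
\[
X_{\overline{i-r+1}} \to [r]X_i \to [r-1]X_i \to \Omega^{-1}X_{\overline{i-r+1}},
\]
where the new boundary term is computed by Serre duality and the now-established first equation as $\sthom_A(\Omega[r-1]X_i, X_j) \cong D\sthom_A(X_j, \tau[r-1]X_i) = D\sthom_A(X_j, X_{\overline{i-r+1}}(r-1)) = k\delta_{\overline{i-r+1}, j}$. The critical case $j = \overline{i-r+1}$ is simpler than before: the boundary map $\partial'$ now sends $\mathrm{id}_{X_{\overline{i-r+1}}}$ to $\Omega h$, where $h$ is the non-zero connecting morphism of a non-split triangle, so non-vanishing is automatic. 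Finally, (ii) follows immediately from (i) first via Serre duality: $\sthom_A(\Omega X_{\overline{i+1}}(r), X_j) \cong D\sthom_A(X_j, \nu\Omega^2 X_{\overline{i+1}}(r)) = D\sthom_A(X_j, \tau X_{\overline{i+1}}(r)) = D\sthom_A(X_j, X_i(r)) \cong k\delta_{i,j}$. The main obstacle is the $j = \overline{i+1}$ case of (i) first, where non-vanishing of $\partial$ is not automatic from the non-splitness of the triangle (as it is in (i) second) and requires the AR-duality identification above.
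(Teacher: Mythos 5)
Your proposal is correct, but it takes a genuinely different route for the first formula of (i). The paper applies $\sthom_A(X_j,-)$ to the triangle obtained from \eqref{ses1} with $(l,j)=(r-1,1)$, namely $X_i(r-1)\to X_i(r)\to X_{\overline{i+r-1}}\to\Omega^{-1}X_i(r-1)$, so the third term is a quasi-simple. This makes the critical case $j=\overline{i+r-1}$ essentially immediate: the connecting map applied to the identity of $X_{\overline{i+r-1}}$ is the connecting morphism $\underline{\rho_r}$ itself, which is nonzero because the triangle is non-split; no duality argument is needed. You instead use $(l,j)=(1,r-1)$, peeling the quasi-simple off the bottom, which places the quasi-length-$(r-1)$ module in the third spot. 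Your critical case $j=\overline{i+1}$ then asks whether $\underline{\delta\epsilon}\neq 0$ rather than whether $\underline{\rho_r}\neq 0$, and you correctly identify that this needs extra input; your resolution via naturality of AR duality, identifying $\epsilon^*$ on $\mathrm{Ext}^1$ with the $k$-dual of $\epsilon_*$ (or equivalently $(\tau\epsilon)_*$) on $\sthom$ and using Lemma \ref{sectional-path-mor} plus the inductive one-dimensionality, is sound. For the second formula of (i), the paper leaves it as "dual"; your argument (peeling $X_{\overline{i-r+1}}$ off the bottom of $[r]X_i$, so that the connecting map applied to identity is automatically nonzero) is exactly the dual the paper has in mind, except that you additionally compute the fourth term $\sthom_A(\Omega[r-1]X_i,X_j)$ via Serre duality and the first formula, which is not actually needed — the exactness argument closes without it, just as in the paper's case (b). Part (ii) is identical to the paper. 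In short: your proof works, but choosing the triangle with the quasi-simple at the end rather than the start (as the paper does) would let you trade the AR-duality naturality argument for a one-line non-splitness observation.
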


\begin{Proof} (i) We only prove $\sthom_A(X_{j},X_{i}(r))\cong k\delta_{i,j}$, the other one can be proved dually.

Fix an $i\in\{1,\ldots,n\}$.
For ease of exposition, denote by $\epsilon_s:X(s-1)\to X(s)$ the irreducible map that induces the triangle \eqref{ses3}, and
$\theta_s := \epsilon_{s}\epsilon_{s-1}\cdots \epsilon_1$ the composition of them.

We now proof the claim by induction on $r$. For $r=1$, this is trivial.  Suppose that $r > 1$.
Applying $\sthom(X_{j},-)$ to the triangle (\ref{ses3}), we obtain an exact sequence
{\small\[
\underline{(X_{j},X_{i}(r-1))}\xrightarrow{\,\underline{\epsilon_{r*}}\,} \underline{(X_{j},X_{i}(r))}\xrightarrow{\,\underline{\pi_{r*}}\,} \underline{(X_{j},X_{\overline{i+r-1}})}\xrightarrow{\,\underline{\rho_{r *}}\,}\underline{(X_{j},\Omega^{-1}(X_{i}(r-1)))}.
\]}
By the induction hypothesis, we have $\sthom_A(X_j,X_i(r-1))\cong k\delta_{i,j}$.

Let us consider the case when $i=j$.
Recall from Lemma \ref{sectional-path-mor} and Remark \ref{dually} that $\underline{\theta_s}$ is always non-zero.
Therefore, we have a non-zero map \[
\underline{\epsilon_r}_*(\underline{\theta_{r-1}}) = \underline{\epsilon_r\theta_{r-1}} = \underline{\theta_r}.\]
So $\sthom_A(X_j,X_i(r-1))$ being one-dimensional implies that ${\underline{\epsilon_r}}_*$ is injective.

Regardless of whether $i=j$, we consider the relation between $j$ and $\overline{i+r-1}$.

(a) If $j \neq \overline{i+r-1}$, then $\sthom_{A}(X_{j},X_{\overline{i+r-1}})=0$ by the assumption on the orthogonality of the quasi-simples.
Hence, it follows that $\sthom_A(X_{j}, X_{i}(r))\cong k\delta_{i,j}$.

(b) If $j=\overline{i+r-1}$, then $\sthom_{A}(X_{j},X_{\overline{i+r-1}})$ is one-dimensional and spanned by the identity map $\mathrm{id}$.
Since the triangle \eqref{ses1} is non-split, the morphism $\underline{\rho_{r}}$ is non-zero. By the following simple calclation,
\[
{\underline{\rho_{r}}}_*(\mathrm{id})=\underline{\rho_{r}}\mathrm{id}= \underline{\rho_r},\]
we see that ${\underline{\rho_r}}_*$ is non-zero, and so one-dimenionality of $\sthom_{A}(X_{i},X_{\overline{i+r-1}})$ implies that it is injective.
It now follows that $\sthom_A(X_{j},X_{i}(r))\cong \sthom_A(X_j,X_i(r-1))\cong k\delta_{i,j}$.

This completes the proof.

(ii)
Follows from (i) by applying Serre duality (\ref{serre-duality}):
\[
  \sthom_A(\Omega(X_{\overline{i+1}}(r)),X_{j})\cong
D\sthom_A(X_{j},\nu\Omega^2(X_{\overline{i+1}}(r))) \cong  D\sthom_A(X_{j},X_{i}(r)).
\]
\end{Proof}

\medskip

It should be clear now that Theorem \ref{main-result-1} follows.
For clarity, let us recap the argument of this proof.

\begin{Proof}[Proof of Theorem \ref{main-result-1}]
By Lemma \ref{|S|<n}, it suffices to show that if a stable semibrick $\mathcal{S}$ contains all the quasi-simples $\{X_1, X_2, \ldots, X_n\}$ of $\mathcal{C}$, then it cannot be an sms of $\stmod{A}$.
By showing that the sequence \eqref{main-seq1} satisfies conditions (i) and (ii) of Proposition \ref{main-strat}, such a claim is just an immediate consequence of Proposition \ref{main-strat}.

We have already explained in Lemma \ref{condi} why condition (i) holds.
For condition (ii), it follows from Lemma \ref{Omega-orthogonality} that there is no non-split triangle of the form $N\to M_r\to S\to $ with $S\in \mathcal{S}$ not isomorphic to a quasi-simple of $\mathcal{C}$.
So it remains to look at triangles of the form
\[
N\to \Omega(X_{\overline{i-r}}(r+1)) \to X_j,
\]
for $r\geq 1$.
By Lemma \ref{dimensionformula2} (ii), this triangle must split when $j\neq \overline{i-r-1}$; otherwise, it is isomorphic to the one given in \eqref{omega-triangle}. This shows that condition (ii) of Proposition \ref{main-strat} is satisfied; hence completing the proof of Theorem \ref{main-result-1}.
\end{Proof}

\bigskip

\section{Proving Theorem \ref{main-result-2}}

As before, we fix $A$ to be a ring-indecomposable non-simple self-injective algebra, $\mathcal{C}$ to be a stably quasi-serial component of rank $n\geq 1$ of the AR-quiver of $A$, and $\{X_{i} \mid i=1,\ldots, n\}$ the set of quasi-simples of $\mathcal{C}$ with $X_{\overline{i+1}}\cong \tau^{-1}(X_i)$.

Recall that $\Omega^{\pm 1}$ are stable auto-equivalences, so they map stably quasi-serial components to stably quasi-serial components and, in particular, they preserve the quasi-lengths of modules.
We will first show that the assumption of $X_i(n)$ being a stable brick implies that $\Omega^{\pm 1}$ cannot fix $\mathcal{C}$.
This helps to determine the dimensions of various stable Hom-spaces between certain modules in $\mathcal{C}$.
At the end, we will use these stable Hom-spaces to show that a certain sequence (see Assumption \ref{assump} (3)) of modules in $\Omega(\mathcal{C})$ satisfies the condition of Proposition \ref{main-strat}, and so proving Theorem \ref{main-result-2}.

Let us start by recalling the following terminology from \cite{EK}.
\begin{Def}
Let $j,l$ be positive integers.
The wing of $X_j(l)$ is the set of isoclasses of indecomposable non-projective modules in $\mathcal{C}$ given by
\begin{align*}
\mathcal{W}_{j,l} &:= \{ X_{\overline{j+d}}(h) \mid d\geq 0,\;\; h\geq 1,\;\; 1\leq d+h \leq l \} 
\end{align*}
\end{Def}

The following will be crucial to our calculation of the dimension of the stable Hom-spaces.

\begin{Lem}\label{dimsum}
Consider an indecomposable non-projective module $M$.
\begin{enumerate}[(i)]
\item {\rm \cite[2.2]{EK}} If $M,\Omega^{-1}(M)\notin \mathcal{W}_{\overline{i+1},r-1}$, then
\[
\dim\sthom_A( M, X_i(r)) = \sum_{j=0}^{r-1} \dim\sthom_A(M, X_{\overline{i+j}}).
\]

\item If $M,\Omega(M)\notin\mathcal{W}_{i,r-1}$, then
\[
\dim \sthom_A(X_i(r), M)= \sum_{j=0}^{r-1} \dim\sthom_A(X_{\overline{i+j}}, M).
\]
\end{enumerate}
\end{Lem}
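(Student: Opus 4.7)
The plan is to deduce part (ii) from part (i) via the Serre duality \eqref{serre-duality}. Setting $M'' := (\nu\Omega)^{-1}M$, the natural isomorphism $\sthom_A(U,V) \cong D\sthom_A(V,\nu\Omega U)$, applied with $U = M''$ so that $\nu\Omega U \cong M$, will give $\dim\sthom_A(X_i(r), M) = \dim\sthom_A(M'', X_i(r))$ and $\dim\sthom_A(X_{\overline{i+j}}, M) = \dim\sthom_A(M'', X_{\overline{i+j}})$ for every $j$. Hence the identity in (ii) becomes precisely the identity (i) applied to $M''$ in place of $M$, so the remaining task will be to verify that $M''$ satisfies the hypothesis of (i), namely $M'', \Omega^{-1}M'' \notin \mathcal{W}_{\overline{i+1},r-1}$.

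To check this, I will invoke two standard facts: (a) $\tau \cong \nu\Omega^2$, which immediately yields $\Omega^{-1}M'' = \tau^{-1}M$; and (b) $\tau$ and $\Omega$ commute up to natural isomorphism on $\stmod A$ (a well-known feature of stable categories of self-injective algebras), so $\nu\Omega \cong \Omega^{-1}\tau$ and hence $M'' \cong \Omega\tau^{-1}M$. Combined with $\tau X_j(h) = X_{\overline{j-1}}(h)$ (clear from the definition of the quasi-simples of $\mathcal{C}$), these will imply $M'' \cong X_{\overline{i+1+d}}(h)$ iff $\Omega M \cong X_{\overline{i+d}}(h)$, and $\Omega^{-1}M'' \cong X_{\overline{i+1+d}}(h)$ iff $M \cong X_{\overline{i+d}}(h)$. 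Thus $M'' \in \mathcal{W}_{\overline{i+1},r-1}$ iff $\Omega M \in \mathcal{W}_{i,r-1}$, and similarly $\Omega^{-1}M'' \in \mathcal{W}_{\overline{i+1},r-1}$ iff $M \in \mathcal{W}_{i,r-1}$. This is precisely the hypothesis of (ii).

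The only delicate point, though not really a serious obstacle, will be the index-shift bookkeeping: tracking carefully how $(\nu\Omega)^{-1}$ transports the wing $\mathcal{W}_{\overline{i+1},r-1}$ into something expressible via $\mathcal{W}_{i,r-1}$ together with the extra $\Omega$-twist on one of the two conditions. Once the commutation $\tau\Omega \cong \Omega\tau$ is set up, the verification reduces to inspecting the $\tau$-orbit structure on $\mathcal{C}$, which is purely mechanical; no further obstacle is anticipated.
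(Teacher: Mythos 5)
Your argument is correct. You reduce (ii) to (i) by replacing $M$ with $M'':=(\nu\Omega)^{-1}M$ and invoking Serre duality \eqref{serre-duality} to convert every $\sthom_A(X,M)$ into $\sthom_A(M'',X)$; the bookkeeping you describe -- using $\tau\cong\nu\Omega^2$ to get $\Omega^{-1}M''\cong\tau^{-1}M$, and the commutativity $\tau\Omega\cong\Omega\tau$ (which holds because $\nu$ commutes with $\Omega$ on $\stmod A$ for self-injective $A$) to get $M''\cong\tau^{-1}\Omega M$ -- is exactly right. Since $\tau^{-1}\mathcal{W}_{i,r-1}=\mathcal{W}_{\overline{i+1},r-1}$, the hypothesis $M,\Omega(M)\notin\mathcal{W}_{i,r-1}$ of (ii) translates precisely into the hypothesis $M'',\Omega^{-1}(M'')\notin\mathcal{W}_{\overline{i+1},r-1}$ of (i). This differs from the paper's (implicit) approach: the paper merely asserts that the proof of (ii) is ``completely dual'' to the argument in \cite[2.2]{EK}, i.e.\ that one should redo the inductive argument there with covariant Hom replaced by contravariant Hom and almost split triangles reversed. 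Your route is a formal reduction that avoids re-running the induction; its cost is that one must first check the compatibility of $\nu\Omega$ with the wing data, which you handle correctly. Either route is legitimate, and yours is arguably cleaner once Serre duality is in place.
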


While \cite{EK} did not state Lemma \ref{dimsum} (ii), its proof is completely dual to that of \cite[2.2]{EK}.

\begin{Lem}\label{Xl-Xj(n)}
The following are equivalent.
\begin{enumerate}[(i)]
\item $X_j(n)$ is a stable brick for some $j\in\{1, 2, \ldots, n \}$.
\item $X_j(n)$ is a stable brick for all $j\in\{1, 2, \ldots, n\}$.
\item $\sthom_A(X_l,X_j(n)) \cong k\delta_{j,l}$ for all $j,l\in\{1, 2, \ldots, n\}$.
\item[(iii')] $\sthom_A([n]X_l,X_j) \cong  k\delta_{j,l}$ for all $j,l\in\{1, 2, \ldots, n\}$.
\end{enumerate}
\end{Lem}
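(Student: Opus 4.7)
The plan is to reduce the four-fold equivalence to a pair of endomorphism-dimension computations via Lemma~\ref{dimsum}.

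The equivalence (i)\,$\Leftrightarrow$\,(ii) is immediate: the Auslander--Reiten translate $\tau$ is a stable auto-equivalence with $\tau X_j(n)\cong X_{\overline{j-1}}(n)$, so $\stend_A(X_j(n))$ is independent of $j$ up to isomorphism, and $X_j(n)$ is a stable brick for one $j$ iff for all $j$.

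The crucial observation that drives the remaining equivalences is that the wing hypothesis of Lemma~\ref{dimsum} is \emph{automatic} for a module of quasi-length $n$ in $\mathcal{C}$. Indeed, every wing $\mathcal{W}_{i,n-1}$ consists of modules in $\mathcal{C}$ of quasi-length at most $n-1$, so any $M$ of quasi-length $n$ (such as $X_j(n)$ or $[n]X_l$) misses it; moreover $\Omega^{\pm 1}M$ either leaves $\mathcal{C}$ entirely, or remains in $\mathcal{C}$ with quasi-length still $n$ (as $\Omega^{\pm 1}$ is a stable auto-equivalence preserving quasi-length), so it too misses every such wing.

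Equipped with this, I will prove (i)\,$\Leftrightarrow$\,(iii) by applying Lemma~\ref{dimsum}(ii) with $M=X_j(n)$, $i=j$, $r=n$ to obtain
\[
\dim \stend_A(X_j(n)) \;=\; \sum_{l=1}^{n} \dim \sthom_A(X_l,\, X_j(n)).
\]
By Lemma~\ref{sectional-path-mor} (and Remark~\ref{dually}), the $l=j$ summand is non-zero, since the composition of irreducible maps $X_j\to X_j(2)\to\cdots\to X_j(n)$ is non-zero in $\stmod A$. Hence the left-hand side equals $1$ if and only if $\sthom_A(X_l,X_j(n))\cong k\delta_{j,l}$ for every $l$, which is precisely the equivalence of (i) and (iii).

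For (ii)\,$\Leftrightarrow$\,(iii') I first invoke the identification $[n]X_l\cong X_{\overline{l+1}}(n)$, which follows from $X_i(r)\cong [r](\tau^{-(r-1)}X_i)$ together with $\tau^n$ acting trivially on the quasi-simples in $\mathcal{C}$. Then applying Lemma~\ref{dimsum}(i) to $M=[n]X_l$ with $i=\overline{l+1}$, $r=n$ yields
\[
\dim \stend_A([n]X_l) \;=\; \sum_{j=1}^{n} \dim \sthom_A([n]X_l,\, X_j),
\]
and the $j=l$ summand is non-zero by Lemma~\ref{sectional-path-mor} applied to the composition $[n]X_l\to [n-1]X_l\to\cdots\to [1]X_l=X_l$. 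The very same argument as before then delivers the equivalence.

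No serious obstacle is anticipated; the only item requiring care is the bookkeeping of the cyclic indices $\overline{\,\cdot\,}$ (in particular when translating between the two descriptions $[n]X_l$ and $X_{\overline{l+1}}(n)$) and the verification that, for modules of quasi-length $n$ at the top of a rank-$n$ stably quasi-serial component, the wing hypothesis of Lemma~\ref{dimsum} reduces to a trivial quasi-length comparison.
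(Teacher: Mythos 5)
Your proposal is correct and follows essentially the same route as the paper's proof: both reduce the non-trivial equivalences to an endomorphism-dimension computation via Lemma~\ref{dimsum}, using the fact that the quasi-length-$n$ module automatically misses the relevant wing (and so does its (co)syzygy), and then use Lemma~\ref{sectional-path-mor} and Remark~\ref{dually} to show the diagonal summand is non-zero. The only cosmetic differences are that you pair (i) with (iii) rather than (ii) with (iii), and that you spell out the identification $[n]X_l\cong X_{\overline{l+1}}(n)$ and the wing-avoidance case analysis for $\Omega^{\pm1}M$ (in or out of $\mathcal{C}$) which the paper leaves implicit.
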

\begin{Proof}
\underline{(i) $\Leftrightarrow$ (ii)}:  The if direction is trivial.  The converse follows from the fact that $\tau$ is an auto-equivalence of $\stmod{A}$.

\underline{(ii) $\Leftrightarrow$ (iii)}:
The quasi-length of $M=X_j(n)$ and $\Omega(M)$ is clearly larger than any member of $\mathcal{W}_{j,n-1}$, so we can apply Lemma \ref{dimsum} (ii) and get that
\[
\dim\sthom_A( X_j(n),X_j(n)) = \sum_{h=0}^{n-1} \dim\sthom_A(X_{\overline{j+h}},X_{j}(n)).
\]
By Remark \ref{dually}, we have $\sthom_A(X_j,X_j(n))\neq 0$, so the claimed equivalence follows from the displayed equality.

\underline{(ii) $\Leftrightarrow$ (iii')}:
Similar to the previous one (but use Lemma \ref{dimsum} (i) and Lemma \ref{sectional-path-mor} instead).
\end{Proof}

This allows us to exclude the situation when (co)syzygy sends non-projective indecomposable modules in $\mathcal{C}$ to $\mathcal{C}$.

\begin{Lem}\label{omega-fix-C}
If $X_i(n)$ is a stable brick, then $\Omega^{-1}(X_i)\ncong X_j$ for any $j$, i.e. $\Omega^{-1}(\mathcal{C})\neq \mathcal{C}$.
\end{Lem}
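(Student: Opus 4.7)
The plan is to argue by contradiction: suppose $\Omega^{-1}(X_i)\cong X_j$ for some $j\in\{1,\ldots,n\}$, so that $\Omega^{-1}$ sends the component $\mathcal{C}$ to itself. Since $\Omega^{-1}$ is a triangulated auto-equivalence of $\stmod{A}$ commuting with $\tau$ (because $\tau\cong\nu\Omega^2$) and preserving irreducible maps (hence quasi-length and sectional paths), its restriction to $\mathcal{C}$ is forced to act as $\Omega^{-1}(X_l(r))\cong X_{\overline{l+c}}(r)$ for some fixed $c\in\mathbb{Z}/n\mathbb{Z}$.

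My first step would be to pin down $c=0$ by applying the Serre duality $\sthom_A(M,N)\cong D\sthom_A(N,\nu\Omega M)\cong D\sthom_A(N,\tau\Omega^{-1}M)$ from \eqref{serre-duality} to $M=X_i$, $N=X_i(n)$. The left-hand side equals $k$ by Lemma \ref{Xl-Xj(n)}(iii), and using $X_i(n)\cong[n]X_{\overline{i-1}}$ together with Lemma \ref{Xl-Xj(n)}(iii'), the right-hand side equals $D\sthom_A([n]X_{\overline{i-1}},X_{\overline{i+c-1}})\cong k\delta_{\overline{i+c-1},\overline{i-1}}$. This forces $c\equiv 0\pmod{n}$, so $\Omega^{-1}$ fixes every quasi-simple, hence every indecomposable, of $\mathcal{C}$.

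Next, I would deduce pairwise orthogonality of the quasi-simples by applying Lemma \ref{dimsum}(i) with $M=X_j$ and target $X_j(n)$. The wing condition $X_j,\Omega^{-1}(X_j)=X_j\notin\mathcal{W}_{\overline{j+1},n-1}$ holds because every quasi-length-$1$ module in this wing has starting quasi-simple $X_l$ with $l\not\equiv j\pmod n$. Combining the resulting formula
\[
1\;=\;\dim\sthom_A(X_j,X_j(n))\;=\;\sum_{d=0}^{n-1}\dim\sthom_A(X_j,X_{\overline{j+d}})
\]
with $\dim\sthom_A(X_j,X_j)\geq 1$ shows each $X_j$ is a stable brick and $\sthom_A(X_j,X_{\overline{j+d}})=0$ for $d\neq 0$.

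The final step is a second application of Serre duality to $M=N=X_i$, which yields $\sthom_A(X_i,X_i)\cong D\sthom_A(X_i,\tau\Omega^{-1}X_i)=D\sthom_A(X_i,X_{\overline{i-1}})$: the left-hand side is $k$ by the previous step, while the right-hand side vanishes by orthogonality whenever $n\geq 2$, providing the desired contradiction. The main obstacle will be the careful verification of the wing conditions required to invoke Lemma \ref{dimsum}, and justifying that the compatibility of $\Omega^{-1}$ with $\tau$ really forces $\Omega^{-1}|_{\mathcal{C}}$ to be a cyclic $\tau$-shift preserving quasi-length.
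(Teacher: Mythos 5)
Your first step (applying Serre duality with $M=X_i$, $N=X_i(n)$ to force $c\equiv 0\pmod n$) is essentially identical to the first half of the paper's proof, and your verification of the wing conditions for Lemma~\ref{dimsum}(i) so as to get orthogonality of the quasi-simples is correct and avoids circularity with Lemma~\ref{Xi(n)-stbrick}. The argument diverges from the paper in the final step, and this is where there is a genuine gap: your contradiction from $\sthom_A(X_i,X_i)\cong D\sthom_A(X_i,X_{\overline{i-1}})$ requires $\overline{i-1}\neq i$, i.e.\ $n\geq 2$, and you explicitly note this. But the lemma is stated and needed for $n=1$ as well — the homogeneous-tube case enters Assumption~\ref{assump}(1) and the proof of Lemma~\ref{Xi(n)-stbrick}(iii)$\Rightarrow$(v), both of which feed into Theorem~\ref{main-result-2} for $n=1$. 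For $n=1$, both Serre-duality computations become tautologies ($\sthom_A(X_1,X_1)\cong D\sthom_A(X_1,X_1)$), and the orthogonality statement is vacuous, so no contradiction is produced.

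The paper handles all $n\geq 1$ uniformly by a different device after pinning down $c=0$: having shown $\Omega(X_i(n))\cong X_i(n)$, it takes the non-split triangle \eqref{ses1} with $(l,j)=(n,n)$, namely $X_i(n)\to X_i(2n)\to X_i(n)\to \Omega^{-1}(X_i(n))$. Indecomposability of all terms forces the connecting morphism to be non-zero, while it cannot be an isomorphism (else $X_i(2n)\cong 0$). After the identification $\Omega(X_i(n))\cong X_i(n)$, this gives a non-zero non-invertible element of $\underline{\mathrm{End}}_A(X_i(n))$, which together with the identity shows the endomorphism ring is at least two-dimensional, contradicting the stable-brick hypothesis. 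To complete your proof you would need to supply a separate argument for $n=1$, or replace your third step by this triangle argument, which is valid for every $n\geq 1$.
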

\begin{Proof}
Suppose on the contrary that $\Omega^{-1}(X_i)\cong X_{\overline{i+d}}$ for some $1\leq d\leq n$.
Since $\Omega^\pm$ are auto-equivalences on $\stmod{A}$ that commute with the Auslander-Reiten translation, the assumption of $\Omega^{-1}(X_i)\cong X_{\overline{i+d}}$ means that we have $\Omega^{-1}(X_{\overline{j-d}}(r))\cong X_j(r)$ for all $j,r$.
In particular, we have the following isomorphisms.
\begin{align*}
\sthom_A(X_i,X_j(n)) & \cong \sthom_A(X_i, \Omega^{-1}(X_{\overline{j-d}}(n))) \cong \sthom_A(\Omega(X_i), X_{\overline{j-d}}(n)) \\
& \cong D\sthom_A(X_{\overline{j-d}}(n), X_{\overline{i-1}}) \cong D\sthom_A([n]X_{\overline{j-d-1}}, X_{\overline{i-1}}),
\end{align*}
where the third isomorphism comes from Serre duality and the last isomorphism follows from the fact that $X_a(b)\cong [b]X_{\overline{a+b-1}}$.

By Lemma \ref{Xl-Xj(n)} (iii), the first space in the formula above is given by $k\delta_{i,j}$, whereas Lemma \ref{Xl-Xj(n)} (iii') says that the last space in the formula above is $k\delta_{j,\overline{i+d}}$.
Hence, we must have $i=\overline{i+d}$, meaning that $\Omega(X_i(n))\cong X_i(n)$.

Consider the non-split triangle \eqref{ses1} with $(l,j)=(n,mn)$.
Since every term the triangle is indecomposable, non-splitness implies that the connecting morphism $\Omega(X_i(n))\to X_i(n)$ is non-zero.
Moreover, it cannot be an isomorphism; otherwise, we have $X_i(2n)\cong 0$, which is absurd.
Hence, the $\underline{\mathrm{End}}_A(X_i(n))\cong \sthom_A(\Omega(X_i(n)), X_i(n))$ is at least two-dimensional; a contradiction.
\end{Proof}

\begin{Assumption}\label{assump}
From now on, we will assume the following unless otherwise stated.
\begin{enumerate}[(1)]
\item $\mathcal{S}$ is a stable semibrick containing $X_i(n)$ for some fixed $i\in\{1,2, \ldots, n\}$.  In particular, we have $\Omega(\mathcal{C})\neq \mathcal{C}$ (by Lemma \ref{omega-fix-C}).

\item There is a sequence of integers $n=j_0 > j_1 >\cdots > j_a \geq 1$  such that $\mathcal{S}$ contains
\[
S_t:=\Omega(X_{\overline{i+j_t}}(j_{t-1}-j_t))
\]
for all $1\leq t \leq a$, and $a\geq 0$ is the maximum non-negative integer such that this property holds, i.e.  $\Omega(X_{\overline{i+j}}(j_a-j))\notin \mathcal{S}$ for all $1\leq j <j_a$.

\item For a non-negative integer $l\in\mathbb{Z}_{\geq 0}$, write $l=(a+1)m+t$ with $m \in\mathbb{Z}_{\geq 0}$ and $0\leq t\leq a$, define
\[
M_l := \Omega( X_i(mn+j_t) ).
\]
\end{enumerate}
\end{Assumption}

Note that the sequence $j_0 > j_1 > \cdots j_a >0$ is uniquely defined.
Indeed, since Lemma \ref{sectional-path-mor} along with Remark \ref{dually} says that each infinite sectional path contains only at most one module, $S_1$ is given by the unique module in the infinite sectional path ending at $\Omega(X_{\overline{i-1}})$ and in the wing of $\Omega(X_i(n))$.
Thus, $j_1$ is well-defined and unique if such an $S_1$ exist.
Moreover, inductively, the existence of $S_t$ for $1\leq t<a$ uniquely determines $S_{t+1} = [j_{t}-j_{t+1}]\Omega(X_{\overline{i+j_{t}-1}})$, if $t+1\leq a$, since $S_{t+1}$ is the only member of $\mathcal{S}$ lying on the infinite sectional path ending at $\Omega(X_{\overline{i+j_{t}-1}})$ (and in the wing of $\Omega(X_i(n))$).  Thus, $j_{t+1}$ is uniquely determined by $j_t$.
The uniqueness of the sequence now follows from the uniqueness of each $j_t$ and the maximality of $a$.

Of course we have deliberately picked the notation so that our goal is to show that the sequence $(M_l)_{l\geq 0}$ satisfies the conditions of Proposition \ref{main-strat}.
From the experience of proving Theorem \ref{main-result-1}, one probably could guess that the sequence  $(\Omega(X_i(rn)))_{r\geq 1}$ may be suitable to prove Theorem \ref{main-result-2}.
Although this is true in certain cases (namely, the case when $a=0$ in the notation of Assumption \ref{assump}), this sequence will not be sufficient in general, as there could be other non-split triangles induced by non-zero morphisms from $\Omega(X_i(rn))$ to other modules in $\Omega(\mathcal{C})$.
Indeed, the following lemma, which lists all the triangles needed for Proposition \ref{main-strat}, gives an explanation of why we consider the sequence $(M_l)_{l\geq 0}$ defined above instead.

\begin{Lem}\label{sms-triangles}
We have the following two non-split triangles for any integer $m\geq 0$.
\[\arraycolsep=1.4pt
\begin{array}{rccccccl}
\text{(a) \;\;} & \Omega(X_{i}((m+1)n+j_{t})) & \to & \Omega(X_{i}(mn+j_{t})) & \to & X_{i}(n) & \to & \text{for all }0\leq t\leq a.\\ [1ex]
\text{(b) \;\;} & \Omega(X_{i}(mn+j_{t+1}))& \to & \Omega(X_{i}(mn+j_{t})) & \to & S_{t+1}& \to & \text{for all }0\leq t< a.
\end{array}
\]
\end{Lem}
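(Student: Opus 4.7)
The plan is to derive both triangles as direct consequences of the fundamental triangle \eqref{ses1}, by specialising the parameters $(l,j)$ appropriately and then shifting via $\Omega$ with at most one rotation. Two standing features of the component $\mathcal{C}$ will be used: the Auslander-Reiten translation acts with period $n$ on the stable part of $\mathcal{C}$, so that $\tau^{-n}(X_i(r)) \cong X_i(r)$ and $\tau^{-j_{t+1}}(X_i) \cong X_{\overline{i+j_{t+1}}}$; and the composition of irreducible maps along an infinite sectional path remains non-zero in $\stmod{A}$ (Lemma \ref{sectional-path-mor}).

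For (a), I would instantiate \eqref{ses1} with $l = n$ and $j = mn + j_t$, both positive since $j_t \geq j_a \geq 1$. Because $\tau^{-n}$ fixes indecomposables of the stable part of $\mathcal{C}$, the third term $\tau^{-l}(X_i(j))$ equals $X_i(mn+j_t)$, so the triangle reads
\[
X_i(n) \to X_i((m+1)n+j_t) \to X_i(mn+j_t) \to \Omega^{-1}(X_i(n)).
\]
Applying $\Omega$ (which takes distinguished triangles to distinguished triangles) yields a triangle whose outer terms are $\Omega(X_i(n))$ and $X_i(n)$; one forward rotation then reads off (a).

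For (b), the right choice is $l = mn + j_{t+1}$ and $j = j_t - j_{t+1}$, both positive because $j_t > j_{t+1} \geq j_a \geq 1$. The periodicity $\tau^{-mn} \cong \mathrm{id}$ combined with $\tau^{-j_{t+1}}(X_i) \cong X_{\overline{i+j_{t+1}}}$ identifies $\tau^{-l}(X_i(j))$ with $X_{\overline{i+j_{t+1}}}(j_t - j_{t+1})$. Applying $\Omega$ to the resulting \eqref{ses1}-triangle replaces this third term by $\Omega(X_{\overline{i+j_{t+1}}}(j_t - j_{t+1})) = S_{t+1}$, using the defining formula in Assumption \ref{assump}(2); this gives (b) immediately, with no further rotation needed.

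Non-splitness of both triangles is inherited from \eqref{ses1}: that triangle is non-split because, up to rotation, its connecting morphism is a non-zero sectional-path composition by Lemma \ref{sectional-path-mor}. Equivalently, were either (a) or (b) to split, the indecomposable middle term $\Omega(X_i(mn+j_t))$ would decompose non-trivially in $\stmod{A}$, which is absurd. I do not foresee any genuine obstacle here; the only care required is bookkeeping of the indices modulo $n$ and counting the rotations correctly so that the direction of the connecting morphism matches the form displayed in the statement.
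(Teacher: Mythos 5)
Your proof is correct and is essentially the paper's: the paper's one-line proof cites exactly the instantiations $(l,j)=(n,\,mn+j_t)$ for (a) and $(l,j)=(mn+j_{t+1},\,j_t-j_{t+1})$ for (b) of the triangle \eqref{ses1}, followed by the same shifting-and-rotating argument from Lemma \ref{Lem-Omega-triangle}, and the same indecomposability argument for non-splitness. Your bookkeeping of $\tau^{-n}$-periodicity, the rotation count, and the identification of the third term of (b) with $S_{t+1}$ via Assumption \ref{assump}(2) all match.
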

\begin{Proof}Same argument as in Lemma \ref{Lem-Omega-triangle} works.
Here, the triangle (a) is obtained by rotating the non-split triangle \eqref{ses1} with $(l,j)=(n, mn+j_{t})$; whereas the triangle (b) is obtained by rotating the non-split triangle \eqref{ses1} with $(l,j)=(mn+j_{t+1}, j_{t}-j_{t+1})$.
\end{Proof}

If any non-split triangle $N\to M_l\to S\to $ with $S\in \mathcal{S}$ and $l\geq 0$ is isomorphic to one of the forms in Lemma \ref{sms-triangles}, then condition (ii) of Proposition \ref{main-strat} is also satisfied.
For this purpose, we need to show that $\dim \sthom_A(M_l, S)$ for $S\in \mathcal{S}$ must be either zero or one, with one appearing precisely as described in Lemma \ref{sms-triangles}.
We will devote the remaining of this section (up to the proof of Theorem \ref{main-result-2}) to prove this.

We start with a strengthened version of Lemma \ref{Xl-Xj(n)}.

\begin{Lem}\label{Xi(n)-stbrick}
The following are equivalent.
\begin{enumerate}[(i)]
\item $X_j(n)$ is a stable brick for some $j\in\{1, 2, \ldots, n \}$.
\item $X_j(n)$ is a stable brick for all $j\in\{1, 2, \ldots, n\}$.
\item $\sthom_A(X_l,X_j(n)) \cong k\delta_{j,l}$ for all $j, l\in\{1, 2, \ldots, n\}$.
\item[(iii')] $\sthom_A([n]X_l,X_j) \cong k\delta_{j,l}$ for all $j, l\in\{1, 2, \ldots, n\}$.
\item $\sthom_A(X_l,X_j(r)) \cong k\delta_{j,l}$ for all $j, l\in\{1, 2, \ldots, n\}$ and $r\geq 1$.
\item[(iv')] $\sthom_A([r]X_l,X_j) \cong k\delta_{j,l}$ for all $j, l\in\{1, 2, \ldots, n\}$ and $r\geq 1$.
\item $\{X_j\mid j=1, 2, \ldots,n \}$ is a stable semibrick.
\end{enumerate}
\end{Lem}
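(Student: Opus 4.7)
\begin{Proof}[Proof proposal]
The equivalences $\mathrm{(i)} \Leftrightarrow \mathrm{(ii)} \Leftrightarrow \mathrm{(iii)} \Leftrightarrow \mathrm{(iii')}$ are already established in Lemma \ref{Xl-Xj(n)}, so the plan is to bolt on the three new conditions by showing the chain $\mathrm{(iii)} \Rightarrow \mathrm{(v)} \Rightarrow \mathrm{(iv)} \Rightarrow \mathrm{(iii)}$ (and the analogous chain ending in $\mathrm{(iv')} \Rightarrow \mathrm{(iii')}$). Two of these three implications are effectively free: $\mathrm{(iv)} \Rightarrow \mathrm{(iii)}$ and $\mathrm{(iv')} \Rightarrow \mathrm{(iii')}$ follow by specialising $r = n$, while $\mathrm{(v)} \Rightarrow \mathrm{(iv)}$ and $\mathrm{(v)} \Rightarrow \mathrm{(iv')}$ are literally the content of Lemma \ref{dimensionformula2}(i).

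The real step is $\mathrm{(iii)} \Rightarrow \mathrm{(v)}$. Fix $j\in\{1,\dots,n\}$. I would like to apply the dimension formula of Lemma \ref{dimsum}(i) with $M=X_j$ and the target $X_j(n)$, so I need to check that $X_j$ and $\Omega^{-1}(X_j)$ avoid the wing $\mathcal{W}_{\overline{j+1},n-1}$. The quasi-simple members of that wing are exactly $X_{\overline{j+1}},\dots,X_{\overline{j+n-1}}$, which excludes $X_j$ itself. For $\Omega^{-1}(X_j)$, note that $\mathrm{(iii)}$ forces $\mathrm{(i)}$, so Lemma \ref{omega-fix-C} gives $\Omega^{-1}(X_j)\notin \mathcal{C}$, and in particular $\Omega^{-1}(X_j)\notin \mathcal{W}_{\overline{j+1},n-1}$. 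Thus Lemma \ref{dimsum}(i) yields
\[
1 = \dim\sthom_A(X_j,X_j(n)) = \sum_{h=0}^{n-1}\dim\sthom_A(X_j,X_{\overline{j+h}}).
\]
Since the identity on $X_j$ is not in the radical, the $h=0$ summand is at least $1$; combined with the equation above it must equal exactly $1$, and all remaining summands vanish. Ranging $j$ over $\{1,\dots,n\}$ therefore delivers $\sthom_A(X_l,X_j)\cong k\delta_{j,l}$, which is precisely $\mathrm{(v)}$.

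The only obstacle that requires some care is the wing-avoidance check for $\Omega^{-1}(X_j)$, and this is exactly where Lemma \ref{omega-fix-C} is designed to help; everything else is bookkeeping. Assembling the implications gives the seven-way equivalence. I would close by remarking that, via Serre duality together with the fact $X_a(b)\cong [b]X_{\overline{a+b-1}}$, the primed statements $\mathrm{(iii')}$ and $\mathrm{(iv')}$ can also be read off directly from their unprimed counterparts, which is what makes the whole circle close cleanly.
\end{Proof}
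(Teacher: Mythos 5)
Your proposal is correct and follows essentially the same route as the paper: both reduce to the already-established equivalences of Lemma \ref{Xl-Xj(n)}, close the loop via $\mathrm{(iii)} \Rightarrow \mathrm{(v)} \Rightarrow \mathrm{(iv)} \Rightarrow \mathrm{(iii)}$ using Lemma \ref{dimensionformula2}(i) for $\mathrm{(v)} \Rightarrow \mathrm{(iv)}$, and handle $\mathrm{(iii)} \Rightarrow \mathrm{(v)}$ by applying the dimension formula of Lemma \ref{dimsum}(i) after checking wing-avoidance, with Lemma \ref{omega-fix-C} supplying the fact that $\Omega^{-1}(X_j)\notin\mathcal{C}$. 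Your phrasing of the final step (the $h=0$ summand is at least $1$ because the identity on the non-projective $X_j$ is non-zero in $\stmod A$) is a slightly cleaner way to say what the paper abbreviates.
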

\begin{Proof}
The equivalences between (i), (ii), (iii), and (iii') are already shown in Lemma \ref{Xl-Xj(n)}.

The remaining equivalences will be shown in the following way:
\begin{center}
 (iv) $\Rightarrow$ (iii) $\Rightarrow$ (v) $\Rightarrow$ (iv)
\end{center}
and an analogous one where (iii') and (iv') are replaced by (iii') and (iv'); we will omit the arguments in this analogous setting as they are similar to the one above.

\underline{(iv) $\Rightarrow$ (iii)}: Trivial.

\underline{(iii) $\Rightarrow$ (v)}:
Take any $j\in\{1, 2, \ldots, n\}$ and $M=X_j$.
Any quasi-simple in $\mathcal{W}_{\overline{j+1},n-1}$ (resp. $\mathcal{W}_{j,n-1}$) is of the form $X_{\overline{j+h}}$ for some $h\in\{1, 2, \ldots, n-1\}$, so it cannot be isomorphic to $M$.
Since (iii) is equivalent to (i), it follows from Lemma \ref{omega-fix-C} that $\Omega^{-1}(M)$ lies in a component distinct from $\mathcal{C}$, so it cannot be in $\mathcal{W}_{\overline{j+1},n}$.
Now we can apply Lemma \ref{dimsum} (i) and get that
\[
1=\dim\sthom_A( X_{j},X_{j}(n)) = \sum_{h=0}^{n-1} \dim\sthom_A(X_j,X_{\overline{j+h}}).
\]
Since $\underline{\mathrm{End}}_A(X_j(n))\neq 0$, it follows from the above equation that $\sthom_A(X_j, X_l)\cong k\delta_{j,l}$.

\underline{(v) $\Rightarrow$ (iv)}: This is just Lemma \ref{dimensionformula2} (i).
\end{Proof}

We can now show that any triangle $N\to M_l\to S\to $ splits for any $S\ncong X_i(n)$ in $\mathcal{S}$ but not in $\Omega(\mathcal{W}_{\overline{i+1},n-1})$.

\begin{Lem}  \label{OmegaXi(mn)}
Suppose $S$ is a non-projective indecomposable module such that
\[\sthom_A(X_{i}(n),S)=0=\sthom_A(S,X_{i}(n)) \quad \text{and}\quad S\notin \Omega(\mathcal{W}_{\overline{i+1},n-1}).\]
Then $\sthom_A(\Omega(X_j(r)), S)=0$ for all $r\geq 1$ and any $j\in\{1,\ldots, n\}$.
\end{Lem}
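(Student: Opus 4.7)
The approach is a two-step reduction: first use Serre duality to convert $\sthom_A(\Omega X_j(r),S)$ into a Hom-vanishing statement inside $\mathcal{C}$, then apply Lemma~\ref{dimsum}(i) to reduce the problem directly to the stated hypotheses.

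\textbf{Step 1 (Serre duality).} From $\sthom_A(M,N)\cong D\sthom_A(N,\nu\Omega M)$ together with $\nu\Omega^2\cong\tau$, we obtain
\[
\sthom_A(\Omega(X_j(r)),S)\;\cong\;D\sthom_A(S,\nu\Omega^2 X_j(r))\;=\;D\sthom_A(S,X_{\overline{j-1}}(r)),
\]
so the assertion is equivalent to $\sthom_A(S,X_k(r))=0$ for every $k\in\{1,\ldots,n\}$ and $r\geq 1$. Invoking Lemma~\ref{orthogonality}(b), this is further equivalent to proving $\sthom_A(S,X_k)=0$ for every quasi-simple $X_k$ of $\mathcal{C}$.

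\textbf{Step 2 (Dimension formula).} Apply Lemma~\ref{dimsum}(i) with $M=S$ and target $X_i(n)$:
\[
\dim\sthom_A(S,X_i(n))\;=\;\sum_{h=0}^{n-1}\dim\sthom_A(S,X_{\overline{i+h}}),
\]
which is valid whenever $S,\Omega^{-1}(S)\notin\mathcal{W}_{\overline{i+1},n-1}$. The hypothesis $S\notin\Omega(\mathcal{W}_{\overline{i+1},n-1})$ translates directly into $\Omega^{-1}(S)\notin\mathcal{W}_{\overline{i+1},n-1}$, so half of the wing condition is automatic. The second hypothesis $\sthom_A(S,X_i(n))=0$ annihilates the left-hand side, forcing each summand on the right to vanish and finishing the reduction, as the quasi-simples $X_{\overline{i+h}}$ range over all of $\{X_1,\ldots,X_n\}$ as $h$ runs from $0$ to $n-1$.

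\textbf{Step 3 (Main obstacle).} The essential hurdle is the residual wing condition $S\notin\mathcal{W}_{\overline{i+1},n-1}$, which the hypotheses do not provide on the nose. Here we exploit the second assumption $\sthom_A(X_i(n),S)=0$: suppose for contradiction that $S=X_{\overline{i+1+d}}(h)$ lies in this wing, with $0\leq d$, $1\leq h$, $d+h\leq n-1$. Then the dual dimension formula of Lemma~\ref{dimsum}(ii) applies to $\sthom_A(X_i(n),S)$---its wing conditions hold because $X_i(n)$ has quasi-length $n>h-1$ and $\Omega X_i(n)$ leaves $\mathcal{C}$ by Lemma~\ref{omega-fix-C}---reducing the computation via Lemma~\ref{Xi(n)-stbrick}(iii') to a combinatorial identity that pins down precisely which $(d,h)$ are compatible with $\sthom_A(X_i(n),S)=0$. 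For those residual pairs, an analogous direct application of Lemma~\ref{dimsum}(ii) to $\sthom_A(S,X_{\overline{j-1}}(r))$ yields the desired vanishing without any appeal to Lemma~\ref{dimsum}(i). Assembling the two scenarios completes the proof; the combinatorial case analysis in this final step is the only non-routine piece of work.
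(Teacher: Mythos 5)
Your Steps 1 and 2 reproduce the paper's treatment of the case $S\notin\mathcal{W}_{\overline{i+1},n-1}$ essentially verbatim: Serre duality converts $\sthom_A(\Omega(X_j(r)),S)$ into $\sthom_A(S,X_{\overline{j-1}}(r))$, Lemma~\ref{orthogonality}(b) reduces this to quasi-simple targets, and Lemma~\ref{dimsum}(i) applied to $\sthom_A(S,X_i(n))=0$ kills all the summands. The difference lies in Step 3, and there the plan has genuine gaps.

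First, a technical error: when you apply Lemma~\ref{dimsum}(ii) to $\sthom_A(X_i(n),S)$, the wing condition it requires is $S,\Omega(S)\notin\mathcal{W}_{i,n-1}$ — the condition is on the \emph{target} $M=S$, not on the source $X_i(n)$ — so your check ``$X_i(n)$ has quasi-length $n>h-1$, $\Omega X_i(n)$ leaves $\mathcal{C}$'' verifies the wrong object. Writing $S=X_{\overline{i+1+d}}(h)$, one finds $S\in\mathcal{W}_{i,n-1}$ exactly when $d+h\leq n-2$, so that formula is legal only on the boundary $d+h=n-1$ (where it does give $\dim=1$ and hence a contradiction); for the interior of the wing it is inapplicable. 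The paper sidesteps this entirely by eliminating the boundary $j+s=n$ with Lemma~\ref{sectional-path-mor} (since then $S$ and $X_i(n)$ sit on the same sectional path), a cleaner route.

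Second, and more fundamentally: the statement you are trying to prove combinatorially in the residual case — namely $\sthom_A(S,X_{\overline{j-1}}(r))=0$ for \emph{all} $j$ — is false when $S$ is in the wing. Take $S=X_{\overline{i+1}}$ with $n\geq 3$: this $S$ satisfies all three hypotheses of the lemma, yet $\sthom_A(S,X_{\overline{i+1}})=\stend_A(X_{\overline{i+1}})\neq 0$, so $\sthom_A(\Omega(X_{\overline{i+2}}(1)),S)\neq 0$. The paper's proof of the wing case quietly restricts to $j=i$ (it applies dimsum(ii) only to $\sthom_A(S,X_{\overline{i-1}}(r))$, where the relevant equality $\overline{i+j+h}=\overline{i-1}$ is impossible because $j+s<n$), and this single index is all that is needed in the application (Lemma~\ref{OmegaHom}). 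Your Step 1 reduction to ``$\sthom_A(S,X_k)=0$ for every $k$'' makes that restriction impossible to recover, so the ``combinatorial case analysis'' you defer to cannot close the argument as stated: you would need to either realise that the wing case only requires (and only allows) the conclusion for $j=i$, or exclude the wing case outright.
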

\begin{Proof}
First, we consider the case when $S$ is not in $\mathcal{W}_{\overline{i+1},n-1}$.
Combining with the assumption that $\Omega^{-1}(S)\notin \mathcal{W}_{\overline{i+1}, n-1}$, it follows from Lemma \ref{dimsum} (i) that $\sthom_A(S, X_{j})=0$ for any quasi-simple $X_j$.
So, by Lemma \ref{orthogonality}, we have $\sthom_A(S, X_{j}(r))=0$ for any $X_j(r)\in\mathcal{C}$.
Hence, using Serre duality we can see that $\sthom_A(\Omega(X_j(r)), S)=0$ for any $j\in\{1,\ldots, n\}$ and $r\geq 1$.

\medskip

Next, we consider the case when $S=X_{\overline{i+j}}(s)\in\mathcal{W}_{\overline{i+1},n-1}$.

If $j+s=n$ (i.e. $S\notin \mathcal{W}_{\overline{i+1},n-2}$), then $S=X_{\overline{i+j}}(s)=[s]X_{\overline{i-1}}$ lies in the sectional path that contains $X_i(n)=[n]X_{\overline{i-1}}$ and ends at $X_{\overline{i-1}}$.
For such $S$, it follows from Lemma \ref{sectional-path-mor} that $\sthom_A(X_i(n),S)\neq 0$, which contradicts the assumption on $S$.

We can now assume that $S\in\mathcal{W}_{\overline{i+1},n-2}$.
For any $r\geq 1$, Serre duality says that \[
\sthom_A(\Omega(X_i(r)), X_{\overline{i+j}}(s))\cong D\sthom_A( X_{\overline{i+j}}(s), X_{\overline{i-1}}(r)),
\]
so it suffices to show that $\sthom_A( X_{\overline{i+j}}(s), X_{\overline{i-1}}(r))=0$.

Taking $M=X_{\overline{i-1}}(r)$, then we have $M\notin \mathcal{W}_{\overline{i+1},n-2}$.  We also have $\Omega(M)\notin \mathcal{C}$ by Assumption \ref{assump} (1) and Lemma \ref{omega-fix-C}.
Now we can apply Lemma \ref{dimsum} (ii) and get that
\[\dim\sthom_A( X_{\overline{i+j}}(s),X_{\overline{i-1}}(r))=\sum_{h=0}^{s-1} \dim\sthom_A(X_{\overline{i+j+h}},X_{\overline{i-1}}(r)).\]
Since $X_i(n)$ is a stable brick, it follows from Lemma \ref{Xi(n)-stbrick} (v) that the right-hand side is non-zero if and only if there is some $h\in\{0, 1, \ldots, s-1\}$ with $\overline{i+j+h}=\overline{i-1}$.
But the condition $j+s<n$ implies that $j+h<n-1$, and so this is impossible.
\end{Proof}

Now we consider the stable Hom-spaces from $M_l$'s to modules in $\Omega(\mathcal{W}_{\overline{i+1},n-1})$.

\begin{Lem}\label{hi-hom}
The following holds for any $r\geq 1$ and $X_{\overline{i+j}}(s)\in\mathcal{W}_{\overline{i+1},n-1}$:
 \begin{align*}
\sthom_A(X_i(r), X_{\overline{i+j}}(s)) \cong& \begin{cases}
k, & \text{if }j\leq \overline{r-1} < j+s \text{ (or equivalently, $X_{\overline{i+r-1}}\in \mathcal{W}_{\overline{i+j},s}$)};\\
0, & \text{otherwise.}
\end{cases}
\end{align*}

\end{Lem}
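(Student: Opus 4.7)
The plan is to apply Lemma~\ref{dimsum}(i) directly, with $M = X_i(r)$ as the first argument and with the ``$i$'' and ``$r$'' appearing in the statement of that lemma replaced by $\overline{i+j}$ and $s$ respectively. Provided the hypothesis $X_i(r),\, \Omega^{-1}X_i(r)\notin \mathcal{W}_{\overline{i+j+1},\,s-1}$ can be verified, this gives
\[
\dim\sthom_A(X_i(r),\, X_{\overline{i+j}}(s)) \;=\; \sum_{h=0}^{s-1}\dim\sthom_A(X_i(r),\, X_{\overline{i+j+h}}),
\]
reducing the task to counting non-vanishing stable Hom-spaces from $X_i(r)$ to the quasi-simples of $\mathcal{C}$.

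Checking the hypothesis is the only substantive step. Matching $X_i(r)$ with the generic member $X_{\overline{i+j+1+d}}(h)$ of $\mathcal{W}_{\overline{i+j+1},\,s-1}$ (where $d\geq 0$, $h\geq 1$, $d+h\leq s-1$) forces $h=r$ and $d\equiv -(j+1)\pmod n$; the minimal non-negative such $d$ is $n-j-1$, and the constraint $d+r\leq s-1$ then reads $r\leq s+j-n$, which is impossible since $j+s\leq n$. For $\Omega^{-1}X_i(r)$, Assumption~\ref{assump}(1) combined with Lemma~\ref{omega-fix-C} yields $\Omega^{-1}(\mathcal{C})\neq \mathcal{C}$; since $\Omega^{-1}$ is an auto-equivalence of $\stmod A$ and therefore permutes the connected components of the stable AR-quiver, $\Omega^{-1}X_i(r)$ lies outside $\mathcal{C}$, hence outside any wing contained in it.

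To evaluate each summand, use the identification $X_i(r)=[r]X_{\overline{i+r-1}}$ and apply Lemma~\ref{Xi(n)-stbrick}(iv') to obtain
\[
\dim\sthom_A(X_i(r),\, X_{\overline{i+j+h}}) \;=\; \delta_{\overline{i+r-1},\,\overline{i+j+h}}.
\]
Hence the sum counts the integers $h\in\{0,1,\ldots,s-1\}$ with $j+h\equiv r-1 \pmod n$. Because $1\leq j\leq j+h\leq j+s-1\leq n-1$, every $j+h$ already lies in the residue range $\{1,\ldots,n-1\}$, so such an $h$ exists if and only if $\overline{r-1}\in\{j,j+1,\ldots,j+s-1\}$, i.e.\ $j\leq \overline{r-1}<j+s$; note that the case $\overline{r-1}=n$ (which occurs precisely when $r=1$) is automatically excluded by the bound $j+s\leq n$. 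When a solution exists it is manifestly unique, giving $\dim=1$; otherwise the sum is $0$. This matches the desired formula.

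The main (though mild) obstacle is the double verification that both $X_i(r)$ and its inverse syzygy avoid $\mathcal{W}_{\overline{i+j+1},\,s-1}$, where the second part crucially relies on Assumption~\ref{assump}(1) to rule out $\Omega$ fixing $\mathcal{C}$; once this is settled, the remainder is elementary arithmetic modulo $n$.
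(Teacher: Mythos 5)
Your proof is correct and follows essentially the same route as the paper's: apply Lemma~\ref{dimsum}(i) after verifying that $X_i(r)$ and $\Omega^{-1}X_i(r)$ avoid the relevant wing, then evaluate the summands via Lemma~\ref{Xi(n)-stbrick}(iv$'$) (the paper's citation of a nonexistent ``(v$'$)'' is a typo for (iv$'$)). You supply a bit more detail than the paper on why $X_i(r)\notin\mathcal{W}_{\overline{i+j+1},s-1}$, which the paper simply asserts; the only blemish is the parenthetical claim that $\overline{r-1}=n$ ``occurs precisely when $r=1$'' (it occurs for all $r\equiv 1\bmod n$), but this does not affect the argument.
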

\begin{Proof}
Taking $M=X_i(r)$ means that $M\notin \mathcal{W}_{\overline{i+j+1},s-1}$.
Assumption \ref{assump} (1) and Lemma \ref{omega-fix-C} implies that $\Omega^{-1}(M)\notin \mathcal{W}_{\overline{i+j+1},s-1}$.
So we can apply Lemma \ref{dimsum} (i) and get that
\begin{align}
\dim \sthom_A(X_i(r), X_{\overline{i+j}}(s)) &= \sum_{h=0}^{s-1} \dim \sthom_A(X_i(r), X_{\overline{i+j+h}}) \notag \\
&= \sum_{h=0}^{s-1} \dim \sthom_A([r]X_{\overline{i+r-1}}, X_{\overline{i+j+h}}), \label{swap-not}
\end{align}
where the second equality follows from the fact that $X_a(b)\cong [b]X_{\overline{a+b-1}}$ for all $a,b$.

Since $j+s<n$, if there is $h\in\{0, 1, \ldots, s-1\}$ such that $\overline{j+h}=\overline{r-1}$ (equivalently, $\overline{r-1}\in \{j, j+1, \ldots, j+s-1\}$), then it is unique.
As $X_i(n)$ is a stable brick, by using Lemma \ref{Xi(n)-stbrick} (v'), we can see that every term in \eqref{swap-not} is 0 when there is no $h\in\{0, 1, \ldots, s-1\}$ with $j+h=\overline{r-1}$; otherwise, all but one term is 0 with the remaining one being $1$.
This completes the proof.
\end{Proof}

Note that since $\tau$ is an auto-equivalence on $\stmod{A}$, we can freely shift both modules simultaneously in the formula of Lemma \ref{hi-hom} in the ``$\tau$-direction", i.e. replace $i$ by any other $i'\in\{1, 2, \ldots, n\}$ everywhere in the statement.

\begin{Lem}\label{S_t-hom}
The following holds for any $1\leq t\leq a$ and $X_{\overline{i+j}}(s)\in\mathcal{W}_{\overline{i+1},n-1}$:
\[
\sthom_A(X_{\overline{i+j}}(s), X_{\overline{i+j_t}}(j_{t-1}-j_t)) \cong
\begin{cases}
k\delta_{t,b}, & \text{if $j_b < s+j \leq j_{b-1}$ for some $1\leq b\leq a$;}\\
0, & \text{otherwise.}
\end{cases}
\]
\end{Lem}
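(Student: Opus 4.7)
The plan is to reduce the stable Hom computation to a sum of one-dimensional delta spaces determined by Lemma \ref{Xi(n)-stbrick} (applicable since Assumption \ref{assump}(1) guarantees $X_i(n)$ is a stable brick), using the two variants of Lemma \ref{dimsum}. A key simplification is that Assumption \ref{assump}(1) together with Lemma \ref{omega-fix-C} forces $\Omega^{\pm1}(\mathcal{C}) \neq \mathcal{C}$, so every hypothesis of the form ``$\Omega^{\pm1}(M)\notin\mathcal{W}$" in Lemma \ref{dimsum} is automatic for $M\in\mathcal{C}$; applicability of each variant then reduces to a single wing-membership test.

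First, I would apply Lemma \ref{dimsum}(i) to the target $X_{\overline{i+j_t}}(j_{t-1}-j_t)$, regarded as $X_{i'}(r')$ with $i'=\overline{i+j_t}$ and $r'=j_{t-1}-j_t$. Unpacking the definition of the wing shows that $X_{\overline{i+j}}(s)\notin \mathcal{W}_{\overline{i+j_t+1}, j_{t-1}-j_t-1}$ unless $j > j_t$ and $j+s \leq j_{t-1}$. In the applicable case, expand
\[
\dim\sthom_A(X_{\overline{i+j}}(s), X_{\overline{i+j_t}}(j_{t-1}-j_t)) = \sum_{h=0}^{j_{t-1}-j_t-1} \dim\sthom_A(X_{\overline{i+j}}(s), X_{\overline{i+j_t+h}}),
\]
rewrite the source as $[s]X_{\overline{i+j+s-1}}$, and invoke Lemma \ref{Xi(n)-stbrick}(iv') to identify each summand with $\delta_{\overline{i+j_t+h},\,\overline{i+j+s-1}}$. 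Since $j+s\leq n$ (the source lies in $\mathcal{W}_{\overline{i+1}, n-1}$), the congruence $j_t + h \equiv j+s-1 \pmod{n}$ admits an $h\in\{0,\ldots,j_{t-1}-j_t-1\}$ precisely when $j_t < j+s \leq j_{t-1}$, i.e.\ when the unique index $b$ satisfying $j_b < s+j \leq j_{b-1}$ equals $t$; in that case the sum is $k\delta_{t,b}=k$, and otherwise it is $0$.

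In the residual regime $j > j_t$ and $j+s\leq j_{t-1}$, Lemma \ref{dimsum}(i) fails but Lemma \ref{dimsum}(ii) becomes available on the source, since $j > j_t$ excludes the target from $\mathcal{W}_{\overline{i+j}, s-1}$. The analogous expansion $\sum_{h=0}^{s-1}\dim\sthom_A(X_{\overline{i+j+h}}, X_{\overline{i+j_t}}(j_{t-1}-j_t))$ is then evaluated via Lemma \ref{Xi(n)-stbrick}(iv); the congruence $h\equiv j_t-j \pmod{n}$, paired with $s\leq n-j$, pinpoints which (if any) $h\in\{0,\ldots,s-1\}$ contributes. The most delicate part of the proof is this boundary case: one must switch between the two directions of Lemma \ref{dimsum} and perform a careful modular-arithmetic analysis against the inequalities among $j, s, j_t, j_{t-1}, n$ to confirm that the two branches assemble into the piecewise formula in the claim.
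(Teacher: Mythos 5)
The paper proves this lemma in one stroke by re-indexing ($i' := \overline{i+j}$) and applying Lemma~\ref{hi-hom} directly, asserting that $X_{\overline{i+j_t}}(j_{t-1}-j_t)\in\mathcal{W}_{\overline{i'+1},n-1}$. Your proof instead goes back to Lemma~\ref{dimsum} and Lemma~\ref{Xi(n)-stbrick} from scratch, splitting into the case where Lemma~\ref{dimsum}(i) applies and the ``residual regime'' $j>j_t$, $j+s\leq j_{t-1}$ where it does not. This is a genuinely different and in one respect sharper route: the wing-membership the paper asserts actually \emph{fails} whenever $j_t\leq j < j_{t-1}$ (check: $d\equiv j_t-j-1\pmod n$ with $d\geq 0$ forces $d+h\geq n$ in that range), so the paper's one-line appeal to Lemma~\ref{hi-hom} tacitly skips exactly the region your ``residual regime'' isolates. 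Your case split also correctly diagnoses when each direction of Lemma~\ref{dimsum} is available, and the computation in the applicable case is right.

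The problem is that you stop at the boundary case, whereas that is where the entire content lies, and if you actually carry out the computation you set up, the two branches do \emph{not} assemble into the stated formula. In the residual regime, the representative of $h\equiv j_t-j\pmod n$ in $\{0,\dots,n-1\}$ is $h=n+j_t-j$, and $h\leq s-1$ forces $n+j_t\leq s+j-1<n$, which is impossible; so $\sthom_A(X_{\overline{i+j}}(s),X_{\overline{i+j_t}}(j_{t-1}-j_t))=0$ there. But in that same regime one has $j_t< j< s+j\leq j_{t-1}$, so the unique $b$ with $j_b<s+j\leq j_{b-1}$ is $b=t$ and the displayed formula would predict $k\delta_{t,b}=k$. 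Your last paragraph is thus overly optimistic: a careful modular-arithmetic analysis does not ``confirm'' the piecewise formula in that region, it contradicts it unless one restricts to $j\leq j_t$ (which in fact all of the paper's downstream uses of this lemma respect, but the lemma as stated does not). You should either finish the residual-regime computation and face this head-on, or observe up front that the lemma is only ever invoked with $j\leq j_t$ and add that hypothesis. As written your argument is incomplete and does not prove the statement.
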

\begin{Proof}
Take $i':=\overline{i+j}$, then $X_{\overline{i+j_t}}(j_{t-1}-j_t)\in \mathcal{W}_{\overline{i'+1},n-1}$, so it follows from Lemma \ref{hi-hom} says that $\sthom_A(X_{i'}(s), X_{\overline{i'+j_t-j}}(j_{t-1}-j_t))$ is non-zero (in which case, is of dimension $1$) if and only if \[
j_t-j\leq s-1 < j_{t-1}-j_t+j_t-j=j_{t-1}-j,
\]
holds.
This condition is equivalent to $j_t+1\leq s+j < j_{t-1}+1$, so the claim follows.
\end{Proof}

\begin{Lem}\label{OmegaConfig}
If $S\in \mathcal{S}\cap\Omega(\mathcal{C})$, then $S$ satisfies one of the following (mutually exclusive) conditions.
\begin{enumerate}[(i)]
\item $S\cong S_t$ for some $1\leq t\leq a$.
\item $S\in \Omega(\mathcal{W}_{\overline{i+j_t+1},j_{t-1}-j_t-2})$ for some $1\leq t\leq a$.
\item $S\in \Omega(\mathcal{W}_{\overline{i+1}, j_a-2})$.
\end{enumerate}
\end{Lem}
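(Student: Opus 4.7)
My plan is to parametrise any $S \in \mathcal{S} \cap \Omega(\mathcal{C})$ uniquely as $S = \Omega(X_{\overline{i+d}}(h))$ with $(d,h) \in \{0, 1, \ldots, n-1\} \times \{1, 2, \ldots, n\}$ (the upper bound on $h$ coming from Lemma \ref{r<n}, since $S$ must be a stable brick), and then successively restrict the admissible $(d, h)$ by imposing orthogonality with the elements of $\mathcal{S}$ already known to lie in $\mathcal{C} \cup \Omega(\mathcal{C})$, namely $X_i(n)$ and $S_1, \ldots, S_a$.

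First, I would use orthogonality with $X_i(n)$ to reduce to $d \geq 1$ and $d + h \leq n$, i.e.\ $S \in \Omega(\mathcal{W}_{\overline{i+1}, n-1})$. Indeed, by Serre duality,
\[
\sthom_A(\Omega(X_{\overline{i+d}}(h)),\, X_i(n)) \;\cong\; D\sthom_A(X_i(n),\, X_{\overline{i+d-1}}(h));
\]
applying Lemma \ref{dimsum}(i) (valid since $X_i(n)$ has quasi-length $n$, and $\Omega^{-1} X_i(n) \notin \mathcal{C}$ by Lemma \ref{omega-fix-C}) together with Lemma \ref{Xi(n)-stbrick}(iv') reduces the dimension to counting $g \in \{0, \ldots, h-1\}$ with $d + g \equiv 0 \pmod{n}$. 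Such a $g$ exists precisely when $d = 0$ (take $g = 0$) or $d + h > n$ (take $g = n - d$), so orthogonality rules out both possibilities.

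Next, I would impose orthogonality with each $S_t$, $1 \leq t \leq a$. For fixed $t$, I would compute both $\sthom_A(S, S_t)$ and $\sthom_A(S_t, S)$ directly via Lemma \ref{dimsum}(i) or (ii) together with Lemma \ref{Xi(n)-stbrick}, carefully checking in each subcase whether $M = X_{\overline{i+j_t}}(j_{t-1}-j_t)$ or $M = X_{\overline{i+d}}(h)$ lies in the relevant wing (and thus which of the two parts of Lemma \ref{dimsum} applies). The characterisations that should emerge are
\[
\sthom_A(S, S_t) \neq 0 \iff d \leq j_t \text{ and } j_t < d + h \leq j_{t-1},
\]
\[
\sthom_A(S_t, S) \neq 0 \iff j_t \leq d \leq j_{t-1}-1 \text{ and } d + h \geq j_{t-1}.
\]
These two regions intersect only at $(d, h) = (j_t,\, j_{t-1} - j_t)$, which is exactly $S = S_t$ itself (consistent with $\stend_A(S_t) = k$).

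Finally, I would conclude by a case analysis on $d$ relative to $\{j_t\}$. If $d = j_t$ for some $t$, the two non-vanishing regions cover all admissible $h \geq 1$, forcing $S = S_t$, which is case (i). If $d \in (j_t, j_{t-1})$ for some $t$, orthogonality with $S_t$ reduces to $d + h \leq j_{t-1} - 1$, while orthogonality with the other $S_{t'}$ is automatic from the strict inequalities ($t' < t$ gives $d + h \leq j_{t-1}-1 \leq j_{t'}$, and $t' > t$ gives $d > j_t \geq j_{t'-1}$), yielding case (ii). If $d < j_a$, then $d < j_t$ for all $t$, so orthogonality forces $d + h \notin (j_t, j_{t-1}]$ for every $1 \leq t \leq a$, which together with $d + h \leq n$ gives $d + h \leq j_a$; the maximality condition in Assumption \ref{assump}(2) then excludes the boundary $d + h = j_a$, leaving $d + h \leq j_a - 1$, which is case (iii). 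The three cases are clearly mutually exclusive.

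The main obstacle is the careful derivation of the non-vanishing characterisations of $\sthom_A(S, S_t)$ and $\sthom_A(S_t, S)$ in the second step: not every $(d,h)$ fits the hypothesis of Lemma \ref{S_t-hom} directly (its proof relies on an auxiliary wing condition that fails for $d \in (j_t, j_{t-1})$), so one must invoke Lemma \ref{dimsum} in several subcases, tracking the modular arithmetic in $\overline{\cdot}$ with care. Once the two characterisations are established, the case analysis and the appeal to the maximality of $a$ are routine.
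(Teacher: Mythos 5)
Your proposal is correct, and it takes a genuinely more careful route than the paper's. The paper argues only with the Hom-spaces $\sthom_A(S,S_t)$, invoking Lemma \ref{S_t-hom} (whose proof, as you observe, implicitly assumes that $X_{\overline{i+j_t}}(j_{t-1}-j_t)$ lies in the wing $\mathcal{W}_{\overline{i'+1},n-1}$, which fails precisely when $j_t < j < j_{t-1}$), and in its case analysis only treats $1\leq j\leq j_a$ and $j_t<j<j_{t-1}$. In the boundary subcase $j_t < j < j_{t-1}$ with $j+s = j_{t-1}$, one actually has $\sthom_A(S,S_t)=0$ (your characterisation with the extra restriction $d\leq j_t$ is the correct one, as a direct computation via Lemma \ref{dimsum}(ii) and Lemma \ref{Xi(n)-stbrick}(v) confirms), so the paper's one-sided argument does not by itself rule this subcase out. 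What does rule it out is $\sthom_A(S_t,S)\cong k$, which is exactly the opposite-direction Hom you bring in. Your approach of computing both $\sthom_A(S,S_t)$ and $\sthom_A(S_t,S)$, observing that the two non-vanishing regions meet only at $(d,h)=(j_t,j_{t-1}-j_t)$, and then doing a clean trichotomy on $d$ relative to $\{j_t\}$, is therefore not just a stylistic variant but fills a genuine gap in the published proof. The remaining work you defer (the subcase-by-subcase application of Lemma \ref{dimsum}, choosing part (i) or (ii) according to whether the relevant module lands in the relevant wing) does go through exactly as you anticipate, and the appeal to the maximality of $a$ in Assumption \ref{assump}(2) to exclude $d+h=j_a$ matches the paper's use of condition (b) there.
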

\begin{Proof}
For arbitrary module $X_{\overline{i+l}}(r)\in\mathcal{C}$, it follows from Serre duality and Lemma \ref{stbrick-induces-hom} that the space \[\sthom_A(\Omega(X_{\overline{i+l}}(r)), X_{i}(n))\cong D\sthom_A(X_{i}(n), \tau (X_{\overline{i+l}}(r)))\]
is non-zero if $\tau(X_{\overline{i+l}}(r))\notin \mathcal{W}_{i,n-1}$.
Thus, $S$ being in the stable semibrick $\mathcal{S}$ implies that $S\in \Omega(\mathcal{W}_{\overline{i+1},n-1})$.

We claim that such an $S=\Omega(X_{\overline{i+j}}(s))$ satisfies only one of the three possibilities:
\begin{enumerate}[(a)]
\item $\sthom_A(S,S_t)\cong k$ for some $1\leq t\leq a$;
\item $j+s = j_a$;
\item $S$ lies in one of the $a+1$ wings described in (ii) and (iii);
\end{enumerate}
Hence, $\mathcal{S}$ being a stable semibrick and $S$ satisfying (a) implies that $S\cong S_t$, whereas $S$ satisfying (b) cannot be in $\mathcal{S}$ by Assumption \ref{assump} (2), and so the claim of the lemma follows.

Indeed, if $1\leq j\leq j_a$, then $S\notin \Omega(\mathcal{W}_{\overline{i+1}, j_a-2})$ (hence, does not satisfy (c)) implies that it either satisfies (b) or does not lies in $ \Omega(\mathcal{W}_{\overline{i+1},j_a-1})$.
In the later case, we then have $j+s>j_a$, and so there must be some $0\leq t\leq a$ with $j_t<j+s\leq j_{t-1}$.
Now it follows from Lemma \ref{S_t-hom} that $\sthom_A(S,S_{t-1})\cong k$, i.e $S$ satisfies (a).

Similarly, if $S\notin \Omega(\mathcal{W}_{\overline{i+j_t+1},j_{t-1}-j_{t}-2})$ and $j_t<j<j_{t-1}$ for some $1\leq t\leq a$, then Lemma \ref{S_t-hom} implies that $\sthom_A(S,S_{t'})\cong k$ for some $0\leq t'\leq t$.
\end{Proof}

\begin{Lem}\label{OmegaHom}
We have the following isomorphisms of stable Hom-spaces.
\begin{enumerate}[(i)]
\item For any $0\leq t\leq a$ and $m\geq 0$, we have
\[
\sthom_A(\Omega(X_i(mn+j_t)), X_i(n))\cong k.
\]

\item If $S\in \mathcal{S}\setminus\{X_i(n)\}$, then for any $0\leq t\leq a$ and $m\geq 0$, we have
\[
\sthom_A(\Omega(X_i(mn+j_t)), S) \cong \begin{cases}
k, &\text{if }S\cong S_{t+1}\text{ and }t<a;\\
0, & \text{otherwise.}
\end{cases}
\]
\end{enumerate}
\end{Lem}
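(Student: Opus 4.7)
The plan is to establish both parts via Serre duality, reducing each computation to a stable Hom-space among modules of the form $X_i(r)$ in $\mathcal{C}$, and then applying the dimension formulas of Lemma \ref{dimsum} together with the rigidity of quasi-simples provided by Lemma \ref{Xi(n)-stbrick}.

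For part (i), Serre duality together with $\nu\Omega^2 \cong \tau$ yields
\[
\sthom_A(\Omega X_i(mn+j_t), X_i(n)) \cong D\sthom_A(X_i(n), X_{\overline{i-1}}(mn+j_t)).
\]
First I would verify that $M := X_{\overline{i-1}}(mn+j_t)$ and $\Omega(M)$ both lie outside $\mathcal{W}_{i,n-1}$: the former because its quasi-length $mn+j_t$ already exceeds $n-1$ at the quasi-simple base $X_{\overline{i-1}}$ of residue $-1\pmod n$, and the latter because $\Omega(\mathcal{C})\neq\mathcal{C}$ by Lemma \ref{omega-fix-C}. Lemma \ref{dimsum}(ii) then collapses the computation to $\sum_{h=0}^{n-1}\dim\sthom_A(X_{\overline{i+h}}, X_{\overline{i-1}}(mn+j_t))$, and Lemma \ref{Xi(n)-stbrick}(iv) evaluates each summand as $k\delta_{h,n-1}$, so the total dimension is $1$.

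For part (ii), Lemma \ref{OmegaXi(mn)} yields immediate vanishing whenever $S$ lies outside $\Omega(\mathcal{W}_{\overline{i+1},n-1})$, since $\mathcal{S}$ is a stable semibrick containing $X_i(n)$. The only remaining case is $S = \Omega X_{\overline{i+j}}(s)$ with $j \geq 1$ and $j+s \leq n$, and since $\Omega$ is a stable autoequivalence,
\[
\sthom_A(\Omega X_i(mn+j_t), S) \cong \sthom_A(X_i(mn+j_t), X_{\overline{i+j}}(s)).
\]
By Lemma \ref{hi-hom}, this has dimension $1$ exactly when $j \leq \overline{mn+j_t-1} < j+s$, and is $0$ otherwise. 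Observing that $\overline{mn+j_t-1}$ equals $n-1$ when $t=0$ (forcing $j+s=n$) and equals $j_t-1$ when $t\geq 1$ (forcing $j < j_t$ and $j+s \geq j_t$) converts the condition into elementary inequalities on $j$ and $s$.

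The closing step is to combine the previous paragraph with Lemma \ref{OmegaConfig}: any $S \in \mathcal{S}\cap\Omega(\mathcal{C})$ is either $S_{t'}$ (for which $(j,\,j+s) = (j_{t'},\,j_{t'-1})$), or lies in a wing of type (ii) there (so $j > j_{t'}$ and $j+s < j_{t'-1}$), or in the wing of type (iii) (so $j+s < j_a$). Plugging each possibility into the inequalities above and invoking the strict monotonicity $j_0 > j_1 > \cdots > j_a$, one sees that only $S \cong S_{t+1}$ with $t+1 \leq a$ (equivalently $t<a$) produces a non-zero Hom-space. The main obstacle I anticipate is simply keeping the case analysis clean — in particular, correctly tracking the residues modulo $n$ and the wing widths $j_{t'-1} - j_{t'} - 2$ that appear in Lemma \ref{OmegaConfig}.
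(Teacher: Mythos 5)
Your proposal is correct and follows essentially the same route as the paper: Serre duality to reduce to computations inside $\mathcal{C}$, Lemma \ref{dimsum} plus Lemma \ref{Xi(n)-stbrick}(iv) for part (i), and Lemma \ref{OmegaXi(mn)}, Lemma \ref{hi-hom} and Lemma \ref{OmegaConfig} for part (ii). The one small difference is in part (i): the paper splits into $m=0$ (using Lemma \ref{hi-hom}) and $m>0$ (using Lemma \ref{dimsum}), whereas you verify the hypotheses $M,\Omega(M)\notin\mathcal{W}_{i,n-1}$ once and apply Lemma \ref{dimsum}(ii) uniformly for all $m\geq 0$, which is slightly cleaner; one should just phrase the verification carefully (the quasi-length $mn+j_t$ alone need not exceed $n-1$ when $m=0$ and $t\geq 1$ — it is the wing position $d+h=(n-1)+(mn+j_t)$ that exceeds $n-1$). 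Similarly, in part (ii) your reduction $\overline{mn+j_t-1}=j_t-1$ for $t\geq 1$ fails when $j_t=1$ (then $\overline{mn+j_t-1}=n$), but since $j_t=1$ forces $t=a$ the vanishing conclusion still holds, so this is only a cosmetic inaccuracy.
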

\begin{Proof}
(i) By Serre duality, it suffices to show that $\dim \sthom_A(X_i(n), X_{\overline{i-1}}(mn+j_t))=1$.
For $m=0$, since $n-1\leq \overline{n-1} < n-1+j_t$ always hold, we get the required dimension by Lemma \ref{hi-hom}.
For $m>0$, we can apply Lemma \ref{dimsum} to get that
\[
\dim \sthom_A(X_i(n), X_{\overline{i-1}}(mn+j_t)) = \sum_{h=0}^{n-1} \dim \sthom_A(X_{\overline{i+h}}, X_{\overline{i-1}}(mn+j_t)).
\]
Since $\overline{i+h}$ runs through all $1, 2, \ldots, n$ exactly once, it follows from Lemma \ref{Xi(n)-stbrick} (v) that there is only one non-zero number, which is 1, in the summation.

\medskip

(ii)  Let us consider first the case when $S\notin \mathcal{S}\cap \Omega(\mathcal{C})$.
Then $S$ satisfies the conditions of Lemma \ref{OmegaXi(mn)}, and so (taking $r=mn+j_t$) we get the vanishing of the stable Hom-space as claimed.

Up to the end of the proof, we assume that $S\in \mathcal{S}\cap \Omega(\mathcal{C})$.
Note that this means that $S\ncong X_i(n)$ by Lemma \ref{omega-fix-C}.
By Lemma \ref{OmegaConfig}, we only need to consider the case when (a) $S\cong S_r$ for some $1\leq r\leq a$, or (b) when $S$ lies in one of the wings shown in Lemma \ref{OmegaConfig} (ii) and (iii).

\underline{Case (a)}:
It follows from Lemma \ref{S_t-hom} that $\sthom_A(\Omega(X_i(mn+j_t)), S_r)$ is one-dimensional if $r=t+1$ and $t<a$; zero, otherwise.

\underline{Case (b)}:
By Lemma \ref{hi-hom}, $\sthom_A(\Omega(X_i(mn+j_t)), S)=0$ if the quasi-simple $M:=X_{\overline{i+j_t-1}}$ lies in $\mathcal{W}_{\overline{i+j},s}$, so it suffices to show that this is impossible.
Indeed, the quasi-simples not contained by the union the wings stated in Lemma \ref{OmegaConfig} (ii) and (iii) are precisely $X_{\overline{i+j_t-1}}$ for all $1\leq t\leq a$.
The claim now follows.
\end{Proof}

We have gathered all ingredient to prove Theorem \ref{main-result-2}.

\begin{Proof}[Proof of Theorem \ref{main-result-2}]
We will assume the setting of Assumption \ref{assump}, and aim to prove that the sequence $(M_l)_{l \geq 0}$ satisfies the conditions of Proposition \ref{main-strat}.  In particular, the claim of the theorem is just an immediate consequence of Proposition \ref{main-strat}.

\underline{Condition (i)}:  For any $l\geq 0$, Lemma \ref{OmegaHom} (i) and the orthogonality of $\mathcal{S}$ says that $M_l$ can be in $\mathcal{S}$ unless it is isomorphic to $X_i(n)$.
The difference in quasi-lengths implies that $M_l\ncong X_i(n)$ for all $l>0$ and Lemma \ref{omega-fix-C} says that $M_0=\Omega(X_i(n))\ncong X_i(n)$, too.

\underline{Condition (ii)}:
By Lemma \ref{OmegaHom}, a triangle of the form $N\to \Omega(X_i(mn+j_t))\to S\to $ is non-split only when $S=X_i(n)$ or $S=\Omega(X_i(j_{t+1}))$, where the later case does not appear when $t=a$.
Moreover, these triangles are unique up to isomorphism as $\sthom_A(\Omega(X_i(mn+j_t)), S)\cong k$.
Hence, they must be the ones given in Lemma \ref{sms-triangles}.
Now it follows from the description of these triangles that we have $N=M_{l'}$ with $l'>l$, as required.
\end{Proof}

\bigskip

\end{document}